\numberwithin{equation}{section}
\newtheorem{theorem}{Theorem}[section]
\newtheorem{proposition}[theorem]{Proposition}
\newtheorem{corollary}[theorem]{Corollary}
\theoremstyle{definition}
\newtheorem{definition}[theorem]{Definition}
\theoremstyle{remark}
\newtheorem{remark}[theorem]{Remark}
\newtheorem{fact}[theorem]{Fact}
\newtheorem{example}[theorem]{Example}
\newtheorem{observation}[theorem]{Observation}
\newtheorem{question}[theorem]{Question}
\newtheorem{conjecture}[theorem]{Conjecture}
\DeclareMathOperator{\CI}{\textnormal{CI-dim}}
\newcommand{\im}{\operatorname{im}}
\newcommand{\Spec}{\operatorname{Spec}}
\newcommand{\G}{G}
\newcommand{\Der}{\operatorname{Der}}
\newcommand{\embdim}{\operatorname{embdim}}
\newcommand{\Ht}{\operatorname{ht}}
\newcommand{\pd}{\operatorname{p.dim}}
\newcommand{\Soc}{\operatorname{Soc}}
\newcommand{\Syz}{\operatorname{Syz}}
\newcommand{\Gdim}{\operatorname{Gdim}}
\newcommand{\id}{\operatorname{id}}
\newcommand{\Ext}{\operatorname{Ext}}
\newcommand{\Tor}{\operatorname{Tor}}
\newcommand{\Hom}{\operatorname{Hom}}
\newcommand{\depth}{\operatorname{depth}}
\newcommand{\Char}{\operatorname{char}}
\newcommand{\coker}{\operatorname{coker}}
\newcommand{\lo}{\longrightarrow}
\newcommand{\fm}{\frak{m}}
\newcommand{\fp}{\frak{p}}
\newcommand{\fa}{\frak{a}}
\newcommand{\fn}{\frak{n}}
\begin{document}

\author[]{Mohsen Asgharzadeh}

\address{}
\email{mohsenasgharzadeh@gmail.com}

\title[ ]
{A note on  the homology of differentials }

\subjclass[2010]{ Primary 13D05, 13N15.}
\keywords{Complete-intersection; derivastion; differential ideals; Gorenstein rings; homological dimensions; totally reflexive modules.}

\begin{abstract} 
	We deal with the complete-intersection property of maximally differential ideals.
	Also, we connect the Gorenstein homology of derivations to the Gorenstein property of the base rings.
These equipped with some applications.
\end{abstract}

\maketitle

\section{Introduction}

Let $R$ be a commutative noetherian ring. In the case of  prime characteristic, each iteration of the Frobenius  map defines a new $R$-module structure on $R$, and this $R$-module is denoted by $ { ^n}R$. By Peskine-Szpiro functor
we mean $F^n(M)=M\otimes { ^n}R$.  Also, let $\mathcal{B}$ be a maximally differential ideal. By $G$-resolution of $\mathcal{B}$ we mean the Gorenstein resolution of $\mathcal{B}$, i.e., an acyclic complex of totally reflexive modules that  approximates $\mathcal{B}$. For more details, and for the unexplained definitions see \S2.
We start with:

\begin{question}(See \cite[Question 4.9]{int}) \label{Q1}
	\label{1}  If $R$ is Cohen-Macaulay and $T_\bullet$ is a minimal $G$-resolution of $\mathcal{B}$ such that $F_n(T_\bullet)$ is acyclic for all $n\geq 1$, then must $\mathcal{B}$ be a complete intersection?
\end{question}

In  \S 3  we give  negative answers to  Question \ref{Q1} from different point of views.
 Our list of examples contains zero and prime characteristic cases, zero and higher  dimension cases, and the case of integral domains. Our next goal is to understand the following amazing conjecture:

\begin{conjecture}(See  \cite[Conjecture 3.12]{int})\label{C1}
Suppose that $R$ is a local $k$-algebra for which the $R$-module $\Der_k(R)$ is finitely generated.
	\begin{enumerate}
	\item[a)]
If $\Gdim(\Der_k(R))<\infty$, then $R$ is Gorenstein;
\item[b)]
If $\CI(\Der_k(R))<\infty$, then $R$ is a complete intersection.
\end{enumerate}
\end{conjecture}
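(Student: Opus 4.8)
The plan is to reduce each part to the corresponding statement for the residue field: by the theorem of Auslander and Bridger, $R$ is Gorenstein if and only if $\Gdim(k)<\infty$, and by the theorem of Avramov, Gasharov and Peeva, $R$ is a complete intersection if and only if $\CI(k)<\infty$. Since $\Gdim(M)\leq\CI(M)$ for every finitely generated $M$, the hypothesis of (b) already entails that of (a), so granting (a) the remaining content of (b) is to upgrade the Gorenstein conclusion to a complete intersection one. First I would run the routine reductions: completing $R$ and making a faithfully flat base change to a perfect residue field alter neither the hypotheses nor the conclusions, while $\Der_k(-)$ commutes with localization and with such base changes and preserves finite generation; so we may assume $R=S/I$ with $(S,\fn,k)$ a regular local ring, $\Der_k(S)$ free of rank $n=\embdim R$, and $I\subseteq\fn^2$. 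Everything then comes down to transporting the standing finiteness from $\Der_k(R)$ to $k$.

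The engine is the conormal sequence. Applying $\Hom_R(-,R)$ to $I/I^2\to\Omega_{S/k}\otimes_S R\to\Omega_{R/k}\to 0$ produces an exact sequence
\[
0\longrightarrow\Der_k(R)\longrightarrow R^{\,n}\xrightarrow{\ \varphi\ }\Hom_R(I/I^2,R),
\]
where $R^{\,n}\cong\Der_k(S)\otimes_S R$ and, for the smooth presentation $R=S/I$, the cokernel of $\varphi$ is the first cotangent module $T^1_{R/k}$, supported on the non-smooth (equivalently, as $k$ is perfect, non-regular) locus of $R$. Thus $\Der_k(R)$ is a first syzygy of $N:=\im\varphi$, and $N$ lies in $0\to N\to\Hom_R(I/I^2,R)\to T^1_{R/k}\to 0$; being a first syzygy inside a free module, finite Gorenstein (resp.\ complete intersection) dimension of $\Der_k(R)$ forces the same for $N$. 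The plan is then to push the finiteness onto $T^1_{R/k}$ and to localize at a prime $\fp$ minimal in the non-Gorenstein (resp.\ non-complete-intersection) locus, which is contained in $\Supp T^1_{R/k}$ whenever it is nonempty; replacing $R$ by $R_\fp$ (still a local $k$-algebra with $\Der_k(R_\fp)$ finitely generated and of finite Gorenstein or complete intersection dimension), we are reduced to the case in which the bad locus is the closed point alone. There the Auslander--Bridger formula makes any module of finite Gorenstein dimension over the depth-zero ring totally reflexive, and one is left to show that a ring which is not Gorenstein (resp.\ not a complete intersection) cannot carry a totally reflexive (resp.\ finite complete intersection dimension) module of the special shape cut out by $\Der_k(R)$; here one must exploit the reflexivity of $\Der_k(R)$, its control of $T^1_{R/k}$, and in characteristic $p$ its behaviour under the Peskine--Szpiro functor $F^n$ as in \Cref{Q1}, together with a Zariski--Lipman-type rigidity to handle the subcase in which $\Der_k(R)$ is forced to be free. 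For (b) one additionally invokes the quasi-deformation description of finite complete intersection dimension, which exports the question to a ring over which projective dimension is finite, at the cost that $\Der_k(-)$ does not transfer transparently along the deformation.

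The hard part will be precisely this last bridge: no formal mechanism forces the finiteness of a single homological invariant of one module to propagate to the ambient ring, so the decisive step must rest on the specific structure of $\Der_k(R)$. Concretely, the two places I expect resistance are (i) controlling $\Hom_R(I/I^2,R)$, which need not itself have finite Gorenstein dimension unless $I$ is already generated by a regular sequence, so that the transit $N\rightsquigarrow T^1_{R/k}$ is not automatic; and (ii) the non-Cohen--Macaulay case and the case of a non-isolated singular locus, where the reduction to a depth-zero base collapses. It is at (ii) that the conjecture risks being of Zariski--Lipman-level difficulty, and it is in characteristic $p$ --- through the acyclicity of the Frobenius powers of a minimal $G$-resolution, the phenomenon behind \Cref{Q1} --- that I would look for the first genuine traction.
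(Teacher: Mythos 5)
Your plan sets out to \emph{prove} Conjecture \ref{C1}, but the paper's point about this statement is the opposite: in the generality in which it is stated (an arbitrary local $k$-algebra with $\Der_k(R)$ finitely generated, no hypothesis on the characteristic or on finite type), the conjecture is \emph{false}, and the paper exhibits a counterexample rather than a proof. In Example \ref{z} one takes $R=\mathbb{F}_2[X,Y]/(X^4,X^2Y^2,Y^4)$: since all partial derivatives of the defining equations vanish in characteristic $2$, the conormal presentation shows $\Omega_{R/k}$ is free of rank $2$, hence $\Der_k(R)\cong R^2$ and $\Gdim(\Der_k(R))=\CI(\Der_k(R))=0<\infty$; yet $\Soc(R)\supseteq(x^3y,xy^3)$ is not one-dimensional, so $R$ is not Gorenstein, let alone a complete intersection. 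So no argument along your lines can close: the decisive step you isolate --- showing that a non-Gorenstein ring cannot carry a module ``of the special shape cut out by $\Der_k(R)$'' of finite Gorenstein dimension --- is exactly what this example refutes, because in prime characteristic $\Der_k(R)$ can be free over a badly singular ring and then carries no homological information about the singularity at all. Your intermediate claim that the cokernel of $R^n\to\Hom_R(I/I^2,R)$ is supported on the non-regular locus, and more generally the expectation that freeness or finite $\Gdim$ of $\Der_k(R)$ forces regularity or Gorensteinness, rests on the rank statement in the Jacobian criterion (Fact \ref{e}), which requires characteristic zero (or reducedness); it is precisely the failure of that rank statement in characteristic $p$ that produces the counterexample. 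Ironically, the place where you hoped for ``the first genuine traction'' --- characteristic $p$ and Frobenius --- is where the conjecture dies.

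What the paper does instead is: (i) answer Question \ref{Q2} affirmatively (Maloo's ring is a complete intersection, Proposition \ref{mal}), so that known example is not a counterexample; (ii) give Example \ref{z} to refute both parts of Conjecture \ref{C1} as stated; and (iii) salvage the conjecture under additional hypotheses --- essentially characteristic zero plus finite type, together with ring-theoretic conditions such as $\fn^n$-quotients of regular rings, $\fm^2=0$ or $\fm^3=0$, artinian, almost complete intersection, or Cohen--Macaulay of minimal multiplicity (Propositions \ref{m3}, \ref{m2}, \ref{almost} and their neighbours), where arguments close to the ones you sketch (dualizing the conormal sequence, Auslander--Bridger, Ramras' results on reflexive modules, the Jacobian criterion, Lipman's theorem, Kunz's results) do go through. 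If you want to keep your write-up, it should be reframed as an analysis of when the conjecture can hold, with the characteristic-zero and finite-type hypotheses made standing assumptions from the start; also be aware that your preliminary reductions (completion, passage to perfect residue field, localization) do not interact with $\Der_k(-)$ as transparently as claimed and would need justification even in that restricted setting.
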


Recall that
Maloo \cite{mal} presented a nontrivial ring  $R$ such that $\Gdim(\Der_k(R))=\CI(\Der_k(R))=0<\infty$. Toward supporting his conjectures,  Miranda-Neto
asked: 

\begin{question}(See  \cite[Page 11]{int}) \label{Q2}
Is the Maloo's ring  Gorenstein? 
\end{question}

In \S4 we show Question \ref{Q2} has positive answer by proving that the proposed ring is complete-intersection.  Despite  this, we show Conjecture \ref{C1} is not true in the above 
setting.  Then, we try to rescue Conjecture \ref{C1}. In this regard, we present a number  of cases for which
the mentioned conjecture remains true. We do this 
by using different computational methods.
Also,  we present a tiny connection to the rigidity conjecture from deformation theory.

In Section 5 we study a connection between differentials and tensor products.
In this regard, we answer the following question:

\begin{question}(See  \cite[Question   3.7]{j}) Let $R$ be a local ring and let $M$ be a finite R-module. Does there exist a torsionless
$	R$-module $U \neq 0$ such that$$\depth(M  \otimes_R U )= \depth(N  \otimes_R U )$$
whenever $N$ is a non-zero finite $R$-module with $\depth M \leq \depth N$?
\end{question}
For a motivation, there is a connection from Question 1.4 to  a famous conjecture of  Herzog and Vasconcelos on the homology of differentials, see \cite[\S3]{j}.
We close \S5 by presenting additional observations on tensor products, higher $\Ext $ and $\Tor$-modules of differentials. This has an application.

\section{Preliminary definitions}

\begin{definition}
	We say that the ideal $\fa$ is  complete intersection if either $\fa=0$ or  $\fa$ can be generated by an $R$-sequence. 
\end{definition}
Here, $(-)^\ast$ means $\Hom_R(-,R)$.
\begin{definition}
	An $R$-module $M$ is called \emph{totally reflexive} provided that: 	\begin{enumerate}
		\item[i)] the natural map $M\rightarrow M^{**}$ is an isomorphism,
		\item[ii)] $\Ext^i_R(M,R)=\Ext^i_R(M^*,R)=0$ for all $i\geq 1$.
	\end{enumerate}  The \emph{Gorenstein dimension} of $M$, denoted $\Gdim_R(M)$, is defined to be the infimum of all nonnegative integers $n$, such that there exists an exact sequence  $0\rightarrow G_n\rightarrow\cdots\rightarrow G_0\rightarrow  M \rightarrow 0,$  in which each $G_i$ is a totally reflexive $R$-module. We call such a resolution, by a minimal  $\G$-resolution.
\end{definition}

Every finitely generated module over a Gorenstein ring has finite Gorenstein dimension. Moreover, if $R$ is local and $\Gdim_R(M)<\infty$, then it follows that $\Gdim_R(M)=\depth R-\depth_R(M)$, and we call it Auslander-Bridger formula. Also, if $(R,\fm,k)$ is local and that  $\Gdim_R(k)<\infty$, then it follows that $R$ is Gorenstein, and we call it the local-global principal. To see more information we recommend the  monograph \cite{AB}.

\begin{definition} 
i)	A derivation is an additive map $D:R\to R$ such that satisfies $D(ab) =aD(b) +bD(a)$.

ii) If $R$ is a $k$-algebra, the set of all derivations vanishes at $k$
is denoted by $\Der_k(R)$.

iii)  Let $\emptyset\neq\mathcal{D}$ be a  set of derivations.  An ideal $I$ is said to be $\mathcal{D}$-differential if $D(I)\subset I$ for all $D\in\mathcal{D}$.
\end{definition}

By noetherian condition, there  is a maximal $\mathcal{D}$-differential proper ideal with respect to the inclusion.  If  the ring is local, there is exactly one such ideal for the given $\mathcal{D}$ we denote it by 
$\mathcal{B}:=\mathcal{B}_{\mathcal{D}}$.

\begin{definition}
	An ideal $\mathcal{B}$ is called maximally differential ideal if there is a set  of derivations $\emptyset\neq\mathcal{D}$  such that $\mathcal{B}=\mathcal{B}_{\mathcal{D}}$.
\end{definition}
By  $\Omega_{R/k}$ we mean the module of  K\"{a}hler  differential.   It is well-known that $\Der_k(R)=\Omega_{R/k}^\ast$. Let $I$ be an ideal of $R$ and set $\overline{R}:=R/ I$. We use the conormal sequence
$$0\lo \frac{I^{(2)}}{I^2}\lo I/I^2 \lo \Omega_{R/k}\otimes_R\overline{R}\lo \Omega_{\overline{R}/k}\lo 0,$$
where $I^{(2)}$ is the second symbolic power.
From now on we restrict the $k$-algebra $R$ be such that $\Der_k(R)$ is finitely generated as an $R$-module.
To see more informations on the module of differentials we recommend the  survey article  \cite{h}, and the related chapters  of the  books \cite{EIS} and \cite{mat}. Finally, we would like to cite the excellent
work \cite{PS} for prime characteristic methods in local algebra.

\section{Complete-intersection and differential ideals}

Here, we give negative answer to Question \ref {Q1} by presenting  several examples.

\begin{example}\label{em2}
	Let  $R:=\frac{\mathbb{F}_2[X,Y]}{(X^2,Y^2)}$. There is a derivation $D$ such that 
the following holds: 	
	\begin{enumerate}
		\item[i)] $(x,y)=\mathcal{B}:=\mathcal{B}_{D}$,
		\item[ii)] $\mathcal{B}$ is totally reflexive, i.e., $T_\bullet:=0\to \mathcal{B}\stackrel{\id}\lo \mathcal{B}\to 0$ is a minimal $G$-resolution of $\mathcal{B}$,
		\item[iii)] $F_n(T_\bullet)$ is acyclic for all $n\geq 1$,
		\item[iv)] $\mathcal{B}$ is not a complete intersection.
	\end{enumerate}
Also, there is a derivation $D'$ such that $\mathcal{B}_{D'}=(x)$.
\end{example}

\begin{proof}
i)	We define $D:=X\partial/\partial X+Y\partial/\partial Y$. We first check that for $D$ is zero
	over any of the relations: 
		\begin{enumerate}
		\item[1)] $D(X^2)=2XD(X)=0$,
		\item[2)]$D(Y^2)=2YD(Y)=0.$
	\end{enumerate}
So, $D$ induces a well defined derivation on  $R$. We use  the small letter for the residue class of elements in $R$. 
Note that $D(x)=x$ and $D(y)=y$. If we set $\fm:=(x,y)$ then $D(\fm)\subset\fm$.
Since $\fm$ is maximal, and in view of the definition, we see that $\fm$ is the maximally differential ideal with respect to $D$. So, $\mathcal{B}=(x,y)$.

ii) The ring $R$ is Gorenstein (in fact, complete-intersection) and is of
dimension zero.
It follows that any module is totally reflexive, as claimed.

iii) Any linear functor sends an isomorphism to an isomorphism.
From this  $F_n(T_\bullet)$ is acyclic for all $n\geq 1$.

iv) Neither $\fm=0$ nor generated by a regular sequence. By definition, $\mathcal{B}$ is not a complete intersection.

If we set $D':=x\partial/\partial x+\partial/\partial y$ then $\mathcal{B}_{D'}=(x)$.
\end{proof}

\begin{remark}\label{rem}
The ring is differentially simple with respect to all of derivations,
i.e.,  $\mathcal{B}_{\Der(R)}=0$.
	\end{remark}

One can prove that \begin{example}\label{e3}
	Let $R:=\frac{\mathbb{F}_p[X_1\ldots,X_n]}{(X^p_1\ldots,X_n^p)}$. There is a derivation $D$ such that: 	
	\begin{enumerate}
		\item[i)] $(x_1,\ldots,x_n)=\mathcal{B}:=\mathcal{B}_{D}$,
		\item[ii)] $\mathcal{B}$ is totally reflexive, i.e., $T_\bullet:=0\to \mathcal{B}\stackrel{\id}\lo \mathcal{B}\to 0$ is a minimal $G$-resolution of $\mathcal{B}$,
		\item[iii)] $F_n(T_\bullet)$ is acyclic for all $n\geq 1$,
		\item[iv)] $\mathcal{B}$ is not a complete intersection.
	\end{enumerate}
\end{example}

Here, the ring is  $1$-dimensional and the example is of any characteristic:

\begin{example}
 Let $k$ be any field  and let $R:=\frac{k[X,Y]}{(XY)}$. There is a derivation $D$ such that: 	
 \begin{enumerate}
 	\item[i)] $(x,y)=\mathcal{B}:=\mathcal{B}_{D}$,
 	\item[ii)] $\mathcal{B}$ is totally reflexive, i.e., $T_\bullet:=0\to \mathcal{B}\stackrel{\id}\lo \mathcal{B}\to 0$ is a minimal $G$-resolution of $\mathcal{B}$,
 	\item[iii)] $F_n(T_\bullet)$ is acyclic for all $n\geq 1$,
 	\item[iv)] $\mathcal{B}$ is not a complete intersection.
 \end{enumerate}
\end{example}

\begin{proof}
	i)	We define $D:=X\partial/\partial X-Y\partial/\partial Y$. We first check that for $D$ is zero
	over any of the relations: 
$$D(XY)=X\partial/\partial X(XY)-Y\partial/\partial Y(XY)=XY-YX=0.$$
So, $D$ induces a well defined derivation on  $R$.
	Note that $D(x)=x$ and $D(y)=y$. If we set $\fm:=(x,y)$ then $D(\fm)\subset\fm$.
	Since $\fm$ is maximal, $\mathcal{B}=(x,y)$.
	
	ii) The ring $R$ is Gorenstein (in fact, complete-intersection) and is of
	dimension one.
	It follows that any module is of $\G$-dimension at most one. In view of
	$0\to \mathcal{B}\to R\to R/\mathcal{B} \to 0$ we deduce that
	$\mathcal{B}$ is of $\G$-dimension zero. In other words, $\mathcal{B}$  is totally reflexive.
	
	iii) Any linear functor sends an isomorphism to an isomorphism.
	From this  $F_n(T_\bullet)$ is acyclic for all $n\geq 1$.
	
	iv) Neither $\fm=0$ nor $\fm$ is generated by a regular sequence. By definition, $\mathcal{B}$ is not a complete intersection.
\end{proof}

\begin{proposition}
Let $R$ be any zero-dimensional Gorenstein ring of prime characteristic $p$.
	 There is a derivation $D$ such that 
the following holds: 	
	\begin{enumerate}
		\item[i)] $\mathcal{B}$ is totally reflexive, i.e., $T_\bullet:=0\to \mathcal{B}\stackrel{\id}\lo \mathcal{B}\to 0$ is a minimal $G$-resolution of $\mathcal{B}$,
		\item[ii)] $F_n(T_\bullet)$ is acyclic for all $n\geq 1$,
		\item[iii)] $\mathcal{B}$ is not a complete intersection.
	\end{enumerate}
\end{proposition}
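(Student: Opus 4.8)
The plan is to follow the pattern of the three examples above: reduce to the case where $(R,\fm,k)$ is local and not a field, exhibit a single derivation whose associated maximally differential ideal is the maximal ideal $\fm$, and then verify (i)--(iii) for $\mathcal{B}=\fm$ from facts already recorded in \S2 together with additivity of the Peskine--Szpiro functor. The reduction is routine: a zero-dimensional Gorenstein ring is a finite product of Artinian local Gorenstein rings and one argues factorwise, while if $R$ is a field then $\fm=0$ is a complete intersection by definition, so that degenerate possibility is to be excluded from the statement.

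For the derivation I would take $D=0$. Since $\Char R=p$, the ring $R$ is an $\mathbb{F}_p$-algebra and every derivation kills $\mathbb{F}_p$, so $D=0\in\Der_k(R)$ for the prime field $k=\mathbb{F}_p$ (indeed for any coefficient field); moreover $\{0\}$ is a legitimate nonempty set of derivations in the sense of \S2. The zero map trivially sends every ideal $I$ into $I$, so every proper ideal is $\{0\}$-differential, and hence the maximal $\{0\}$-differential proper ideal is $\fm$; that is, $\mathcal{B}:=\mathcal{B}_D=\fm$. (If a nonzero derivation is preferred, an Euler-type derivation as in the examples can be used whenever $R$ carries a compatible coefficient field, but this refinement is not needed.)

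It remains to read off the three assertions for $\mathcal{B}=\fm$. For (i): since $R$ is zero-dimensional and Gorenstein, every finitely generated $R$-module is totally reflexive --- this was already used in the examples, and follows from the Auslander--Bridger formula $\Gdim_R(\fm)=\depth R-\depth_R(\fm)=0$ --- so $\mathcal{B}=\fm$ is totally reflexive and $0\to\mathcal{B}\stackrel{\id}\lo\mathcal{B}\to0$ is its minimal $G$-resolution. For (ii): $F_n=-\otimes_R{}^{n}R$ is additive, hence carries the isomorphism $\id_{\mathcal{B}}$ to the isomorphism $\id_{F_n(\mathcal{B})}$, so $F_n(T_\bullet)$ is acyclic for all $n\geq 1$, exactly as in the examples. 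For (iii): every $R$-regular sequence has length at most $\depth R=0$ and so is empty, whence $0$ is the only complete-intersection ideal of $R$; since $R$ is not a field, $\mathcal{B}=\fm\neq 0$, and therefore $\mathcal{B}$ is not a complete intersection.

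There is no genuine obstacle here; the only points deserving care are bookkeeping ones --- formulating the hypotheses for which the statement is literally correct (local and not a field, or equivalently handling the product decomposition and discarding the field case), and confirming that the zero derivation is admissible in the definition of a maximally differential ideal. Once these are settled, (i)--(iii) are immediate from the Auslander--Bridger formula, the additivity of $F_n$, and the vanishing of $\depth R$.
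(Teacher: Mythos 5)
Your proof is correct under the paper's own definitions, but it takes a genuinely different and more economical route than the paper does. The paper argues by cases: if $R\cong k[X_1,\dots,X_n]/(X_1^p,\dots,X_n^p)$ it invokes Example \ref{e3}, where an explicit Euler-type derivation with $\mathcal{B}_D=\fm$ is constructed; otherwise it uses the characterization of differentially simple rings of prime characteristic to conclude that $R$ is not differentially simple, hence $\mathcal{B}_{\mathcal{D}}\neq 0$ for every nonempty set of derivations, and then repeats the verifications of Example \ref{em2}. You bypass both the case division and the appeal to differential simplicity by taking $D=0$: every proper ideal is $\{0\}$-differential, so $\mathcal{B}=\fm$ in the local case, and your verifications of (i)--(iii) (total reflexivity of any finitely generated module over a zero-dimensional Gorenstein ring, additivity of $F_n$ sending $\id$ to an isomorphism, and the nonexistence of nonzero regular sequences in depth zero) are exactly the paper's. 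What your shortcut buys is brevity and uniformity; what it costs is content: with $\mathcal{D}=\{0\}$ the differential condition is vacuous, so the resulting negative answer to Question \ref{Q1} rests on the convention that the zero derivation is admissible --- Definition 2.3 literally only requires $\mathcal{D}\neq\emptyset$, so you are within the letter of the paper, but the spirit of Miranda-Neto's question concerns derivations that genuinely act, which is why the paper produces nonzero ones. Two bookkeeping points are in your favour: you explicitly exclude the field case, where the statement is actually false ($\mathcal{B}=0$ is a complete intersection by definition) and which the paper's proof glosses over, and you make explicit the reduction to the local case via the product decomposition, which the paper leaves tacit.
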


\begin{proof}
If $R$ is of the form $\frac{k[X_1\ldots,X_n]}{(X^p_1\ldots,X_n^p)}$
then we get the claim, see  Example \ref{e3}.
Then we may assume $R$ is not of that form. In particular,
$R$ is not differentially simple with respect to all of derivations.
From this, $\mathcal{B}$ is not zero with respect to any subset of derivations.
By adopting the proof of Example \ref{em2} we see all of claims are true.
\end{proof}

Here, we present  an example which is an integral domain:

\begin{example}\label{inte}
	Let $R:=\mathbb{F}_2[T^4,T^5,T^6]$ which is $1$-dimensional. There is a derivation $D$ such that 
	the following holds: 	
	\begin{enumerate}
		\item[i)] $(T^4,T^5,T^6)=\mathcal{B}:=\mathcal{B}_{D}$,
		\item[ii)] $\mathcal{B}$ is totally reflexive, i.e., $T_\bullet:=0\to \mathcal{B}\stackrel{\id}\lo \mathcal{B}\to 0$ is a minimal $G$-resolution of $\mathcal{B}$,
		\item[iii)] $F_n(T_\bullet)$ is acyclic for all $n\geq 1$,
		\item[iv)] $\mathcal{B}$ is not a complete intersection.
	\end{enumerate}
Also, there is another derivation $D'$ such that 
	\begin{enumerate}
	\item[a)] $(T^4,T^6)=\mathcal{B}:=\mathcal{B}_{D'}$,
	\item[b)] $\mathcal{B}$ is totally reflexive, i.e., $T_\bullet:=0\to \mathcal{B}\stackrel{\id}\lo \mathcal{B}\to 0$ is a minimal $G$-resolution of $\mathcal{B}$,
	\item[c)] $F_n(T_\bullet)$ is acyclic for all $n\geq 1$,
	\item[d)] $\mathcal{B}$ is not a complete intersection.
\end{enumerate}
\end{example}

\begin{proof}
Note that	$R=\frac{\mathbb{F}_2[X,Y,Z]}{(XZ-Y^2,X^3-Z^2)}$.
For the first part, we define $D:=Y\partial/\partial Y$. We need to check that  $D$ is zero
over any of the relations: 
\begin{enumerate}
	\item[1)] $D(XZ-Y^2)=YD(Y^2)=2Y^2=0$,
	\item[2)]$D(X^3-Z^2)=0.$
\end{enumerate}
So, $D$ induces a well defined derivation on  $R$.  
Note that $D(X)=D(Z)=0$ and $D(Y)=Y$.  We use  the small letter for the residue class of elements in $R$. If we set $\fm:=(x,y,z)$ then $D(\fm)\subset\fm$.
Since $\fm$ is maximal,  $\mathcal{B}=(x,y,z)$.

ii) The ring $R$ is Gorenstein (in fact, complete-intersection) and is of
dimension one.
It follows that any module is of $\G$-dimension at most one. In view of
$0\to \mathcal{B}\to R\to R/\mathcal{B} \to 0$ we deduce that
$\mathcal{B}$ is of $\G$-dimension zero. In other words, $\mathcal{B}$  is totally reflexive.

iii) Any linear functor sends an isomorphism to an isomorphism.
From this  $F_n(T_\bullet)$ is acyclic for all $n\geq 1$.

iv) Neither $\fm=0$ nor $\fm$ is generated by a regular sequence. By definition, $\mathcal{B}$ is not a complete intersection.

For the second part, we set $D':=\partial/\partial y$. Since $D'$ is zero
over any of the relations, i.e., it induces a well defined derivation on  $R$.
Note that $D'(X)=D'(Z)=D'(Y^2)=0$ and $D'(Y)=1$.
We combine this with the fact that $\ell(R/(x,y^2,z))=2$ to conclude
that   $\mathcal{B}=(x,y^2,z)$. Since $xz-y^2=0$ we deduce that $\mathcal{B}=(x,z)$, and we see the desired claims.
\end{proof}

\section{Derivations and the Gorenstein property}

By $\mu(-)$ we mean the minimal number of elements that needs to generate $(-)$.
Recall that the embedding dimension of a local ring $(R,\fm)$ is $\embdim R := \mu(\fm)$.
Now, we recall the following:

\begin{fact}\label{mat}(See \cite[Exercise 21.2]{mat})
 Let $A$ be a  local ring with $\embdim A = \dim A + 1$. If $ A$ is Cohen-Macaulay,
	then it is  complete-intersection.
\end{fact}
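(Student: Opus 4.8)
The statement is classical (it is the cited exercise of Matsumura), so I will only sketch how to recover it.

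\emph{Step 1: reduce to the complete case.} First I would note that Cohen--Macaulayness, the complete-intersection property, and the invariants $\embdim$ and $\dim$ are all unchanged under completion, so I may replace $A$ by $\widehat A$ and, via the Cohen structure theorem, write $\widehat A = R/I$ with $(R,\fn)$ a complete regular local ring and $\dim R = \embdim A = \dim A + 1$; then automatically $I \subseteq \fn^2$, and by the Auslander--Buchsbaum theorem $R$ is a UFD.

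\emph{Step 2: locate $I$.} Since $R$ is regular (hence catenary and equidimensional), $\Ht I = \dim R - \dim(R/I) = \embdim A - \dim A = 1$, and $I \neq 0$ because $\dim A < \dim R$. Because $A = R/I$ is Cohen--Macaulay over the local ring $R$, the ideal $I$ is unmixed: $\Ass(R/I) = \Min(R/I)$, and every such prime has height exactly $1$.

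\emph{Step 3: principality of $I$.} This is the heart of the matter: a height-one unmixed ideal of a UFD is principal. I would write $\Min(R/I) = \{\fp_1,\dots,\fp_r\}$, observe that each $\fp_i = (p_i)$ is principal with $p_i$ a prime element and the $p_i$ pairwise non-associate, take an irredundant primary decomposition $I = \fq_1 \cap \cdots \cap \fq_r$ with $\fq_i$ being $\fp_i$-primary, and compute in the DVR $R_{\fp_i}$ that $\fq_i R_{\fp_i} = (p_i^{a_i})R_{\fp_i}$ for some $a_i \geq 1$; contracting back (using that $(p_i^{a_i})$ is already $\fp_i$-primary in $R$) gives $\fq_i = (p_i^{a_i})$. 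As the $p_i$ are non-associate primes, $I = \bigcap_i (p_i^{a_i}) = (f)$ with $f := p_1^{a_1}\cdots p_r^{a_r}$.

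\emph{Step 4: conclude.} Here $f \neq 0$ is a nonzerodivisor in the domain $R$, so $I = (f)$ is generated by an $R$-regular sequence of length one; hence $\widehat A$, and therefore the original ring $A$, is a complete intersection.

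I expect Step 3 to be the only genuine obstacle; Steps 1, 2 and 4 are routine bookkeeping once one remembers that Cohen--Macaulay quotients are unmixed. An alternative route that avoids completion altogether is to induct on $d = \dim A$: by the prime-avoidance lemma (which tolerates the single non-prime member $\fm^2$) one can choose a minimal generator $x \in \fm \setminus \fm^2$ lying outside every minimal prime of $A$, so that $A/(x)$ is again Cohen--Macaulay with $\embdim = \dim + 1$ and dimension $d-1$; since the complete-intersection property both ascends and descends along quotients by nonzerodivisors, one reduces to the case $d = 0$, where $\embdim A = 1$ forces $\fm$ to be principal and $A$ to be a hypersurface.
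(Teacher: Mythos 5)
The paper does not prove this statement at all: it is quoted as a Fact with a pointer to \cite[Exercise 21.2]{mat}, and is then used as a black box in Proposition \ref{mal}. So there is nothing to compare line-by-line; what matters is whether your reconstruction is sound, and it is. Your main route (complete, write $\widehat A=R/I$ with $R$ regular of dimension $\embdim A$, use Cohen--Macaulayness to get that $I$ is unmixed of height one, and then use that a height-one unmixed ideal in the UFD $R$ is principal) is the standard solution of the cited exercise, and Step 3 is carried out correctly: $(p_i^{a_i})$ is indeed $\fp_i$-primary, contraction from the DVR $R_{\fp_i}$ recovers $\fq_i$, and the pairwise non-associate primes give $I=(p_1^{a_1}\cdots p_r^{a_r})$. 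The only point I would flag is Step 1: the claim that Cohen's structure theorem directly yields a presentation with $\dim R=\embdim A$ and $I\subseteq\fn^2$ is immediate only in the equicharacteristic case; in mixed characteristic with $p\in\fm^2$ the natural Cohen presentation is by $V[[x_1,\dots,x_e]]$ of dimension $\embdim A+1$, and one must first divide by an element of the kernel of the form $p-q\notin\fn^2$ (the quotient is again regular) to reach a minimal presentation. This is routine, but as written the step is slightly glossed; also note that ``complete intersection'' must be read as independent of the chosen regular presentation, which is the standard fact making Step 4 legitimate. Your alternative induction on $\dim A$ via prime avoidance and the ascent of the complete-intersection property along a regular element is also correct and avoids completion entirely, which is arguably closer in spirit to how the Fact gets applied in the paper (Maloo's ring is $1$-dimensional with $\mu(\fm)=2$).
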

Here, we affirmatively  answer Question \ref {Q2}.

\begin{proposition}\label{mal}
	Maloo's ring is complete-intersection, and so Gorenstein.
\end{proposition}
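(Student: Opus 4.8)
The plan is to identify Maloo's ring explicitly and then invoke \Cref{mat}. Maloo's example (from \cite{mal}) is a local $k$-algebra $R$ with the property that $\Der_k(R)$ is free of small rank, and the first step is to write it down concretely — presumably as a quotient $R = k[X_1,\dots,X_n]_{\fm}/I$ (or its completion) for an explicit ideal $I$. Having done so, I would compute the two invariants appearing in \Cref{mat}: the Krull dimension $\dim R$ and the embedding dimension $\embdim R = \mu(\fm)$. The dimension can be read off from the number of variables minus the height of $I$ (or directly from a system of parameters), and the embedding dimension from the number of minimal generators of the maximal ideal of the quotient, i.e. from how many of the $X_i$ survive modulo $I + \fm^2$.

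The second step is to verify the two hypotheses of \Cref{mat}, namely that $\embdim R = \dim R + 1$ and that $R$ is Cohen-Macaulay. For the embedding-dimension equality one checks that $I \subseteq \fm^2$ accounts for exactly a codimension-one drop, i.e. $I$ is minimally generated by elements whose initial forms span a space of the right size; equivalently $R$ has exactly one more generator than a parameter system. For Cohen-Macaulayness, if $I$ turns out to be generated by a regular sequence this is automatic (and in fact already gives complete intersection directly), but if $I$ has more generators than $\codim I$ one instead exhibits a maximal regular sequence of length $\dim R$ inside $\fm$, or quotes that the ring is a one-dimensional domain / reduced ring, which is Cohen-Macaulay for trivial reasons. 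Then \Cref{mat} yields that $R$ is complete-intersection, and since every complete intersection is Gorenstein, the second assertion follows immediately.

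I expect the main obstacle to be purely bookkeeping: pinning down exactly which presentation of Maloo's ring is being used, and then checking that the minimal generators of the defining ideal contribute precisely a codimension-one excess over the parameter count so that $\embdim R = \dim R + 1$ holds on the nose. Once that numerical coincidence is confirmed and Cohen-Macaulayness is in hand (which should be the easy part, given that the ring in question is low-dimensional and nice), the conclusion is immediate from \Cref{mat}. A possible subtlety is whether one must first pass to the completion $\widehat{R}$ to apply \Cref{mat}; this is harmless since $\embdim$, $\dim$, the Cohen-Macaulay property, and the complete-intersection property all ascend and descend along $R \to \widehat{R}$.
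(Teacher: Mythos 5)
Your proposal is correct and follows essentially the same route as the paper: note that Maloo's ring is a one-dimensional local domain (hence Cohen--Macaulay), check that its maximal ideal needs exactly two generators while it is not regular, so $\embdim R = 2 = \dim R + 1$, and conclude via \cref{mat}, with Gorenstein following since complete intersections are Gorenstein. The only step you defer --- reading off $\dim R = 1$ and $\mu(\fm) = 2$ from Maloo's explicit example --- is exactly what the paper's proof does by inspection of \cite{mal}.
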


\begin{proof}
Recall that Maloo's ring is a 1-dimensional noetherian local integral domain which is not regular.
In particular,
it is Cohen-Macaulay. By looking
at the example, we see its maximal ideal
generated by two elements. Since
$(R,\fm)$ is not regular, $\mu(\fm)=2$.
So, $\embdim R=\mu(\fm) =2= \dim R+ 1$. In view of Fact \ref{mat}, $R$ is complete-intersection.
\end{proof}

Here, we show both parts of Conjecture \ref{C1} are not true:

\begin{example}\label{z}
There is a  local $k$-algebra  $R$ for which the $R$-module $\Der_k(R)$ is finitely generated such that
\begin{enumerate}
\item[a)] $\Gdim(\Der_k(R))<\infty$,
\item[b)] $\CI(\Der_k(R))<\infty$,
\item[c)] $R$ is not Gorenstein, and so it is not complete-intersection.
\end{enumerate}
\end{example}

\begin{proof}
Let $R:=\frac{\mathbb{F}_2[X,Y]}{(X^4,X^2Y^2,Y^4)}$.	For simplicity, we set $k:=\mathbb{F}_2$.
We set $f_1:=X^4$, $f_2:=X^2Y^2$ and $f_3:=Y^4$. For any $f\in K[X,Y]$ we set $df:=\partial(f)/\partial Xdx+\partial(f)/\partial Ydy$.	From the conormal sequence, the module of  K\"{a}hler  differential is of the from $$\Omega_{R/k}\cong
 Rdx\oplus Rdy  /(df_1,df_2,df_3),$$ where  $Rdx\oplus Rdy$ is a free $R$-module with base $\{dx,dy\}$. Also, we have 
\begin{enumerate}
\item[1)] $df_1=\partial(X^4)/\partial Xdx+\partial(X^4)/\partial Y dy=4X^3dx=0$,
\item[2)] $df_2=\partial(X^2Y^2)/\partial Xdx+\partial(X^2Y)/\partial Ydy=2Y^2Xdx+2YX^2dy=0$,	
\item[3)] $df_3=\partial(Y^4)/\partial Xdx+\partial(Y^4)/\partial Y dy=4Y^3dy=0$.
\end{enumerate}
Thus, $\Omega_{R/k}$ is a free $R$-module of rank $2$. Note that $$\Der_k(R)=\Omega_{R/k}^\ast\cong\Hom_R(\Omega_{R/k},R)\cong R^2.$$
From this,  $$\Gdim(\Der_k(R))=\CI(\Der_k(R))=\pd(\Der_k(R))=0<\infty,$$i.e., $a)$ and $b)$ are valid. In order to show c), we compute the socle of $R$ and it is enough to show that it is not one-dimensional.
Indeed, $\Soc(R)=(0:_R\fm)\supseteq(x^3y,y^3x)$.
\end{proof}
We need the following result:

\begin{fact}(Jacobian criterion, see \cite[Corollary 16.22]{EIS}) \label{e}
Let $R$ be an  equi-dimensional affine ring (not necessarily reduced) over a perfect field $k$. The following assertions are true:
\begin{enumerate}
	\item[a)]	Suppose that  $\Omega_{R/k}$  is locally free over $R$ and $R$ is reduced or $\Char k = 0$, then $\Omega_{R/k}$ has rank $d := \dim R.$
\item[b)]  The module $\Omega_{R/k}$ is locally free over $R$ and of rank  $d$ iff $R_{\fp}$ is a regular local ring for each prime ideal
	$\fp$ of $R$.
\end{enumerate}
\end{fact}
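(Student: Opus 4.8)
The plan is to reduce both assertions to a single pointwise statement: for every prime $\fp$ of $R$ one has $\dim_{\kappa(\fp)}(\Omega_{R/k}\otimes_R\kappa(\fp))\geq d$, with equality exactly when $R_\fp$ is regular. Granting this, (a) and (b) drop out immediately. To set things up, write $R=S/I$ with $S=k[x_1,\dots,x_n]$ a polynomial ring and $I=(f_1,\dots,f_m)$; this is possible because $R$ is affine, and it realises $\Omega_{R/k}$ as a finitely generated $R$-module. The conormal sequence of $S\twoheadrightarrow R$ reads
$$I/I^2\xrightarrow{\ \delta\ }\bigoplus_{i=1}^{n}R\,dx_i\lo\Omega_{R/k}\lo 0,\qquad \delta(\overline{f})=\sum_{i=1}^{n}\frac{\partial f}{\partial x_i}\,dx_i ,$$
so $\delta$ is given by the Jacobian matrix $J=(\partial f_j/\partial x_i)$, and tensoring with $\kappa(\fp)$ and counting dimensions yields the classical identity $\dim_{\kappa(\fp)}(\Omega_{R/k}\otimes_R\kappa(\fp))=n-\rank J(\fp)$, with $J(\fp)$ the Jacobian reduced into $\kappa(\fp)$. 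Since $R$ is noetherian I will use freely that $\Omega_{R/k}$ is locally free of rank $r$ if and only if the fibre function $\fp\mapsto\dim_{\kappa(\fp)}(\Omega_{R/k}\otimes_R\kappa(\fp))$ is constantly equal to $r$ on $\Spec R$.

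The substantive input I would invoke is the local Jacobian/smoothness criterion: for each $\fp\in\Spec R$,
$$\dim_{\kappa(\fp)}\bigl(\Omega_{R/k}\otimes_R\kappa(\fp)\bigr)\ \geq\ \dim R_\fp+\mathrm{tr.deg}_k\kappa(\fp),$$
with equality if and only if $R_\fp$ is regular and $\kappa(\fp)$ is separably generated over $k$. The two hypotheses of the Fact now enter. Since $k$ is perfect, every finitely generated field extension $\kappa(\fp)/k$ is separably generated, so the separability clause is automatic and equality above becomes equivalent to regularity of $R_\fp$. Since $R$ is an equi-dimensional affine $k$-algebra, the dimension formula for affine algebras gives $\dim R_\fp+\mathrm{tr.deg}_k\kappa(\fp)=d$ for every $\fp$: in each component $R/\fq$ with $\fq\in\Min R$ and $\fq\subseteq\fp$ one has $\dim(R/\fq)_{\fp}+\dim R/\fp=\dim R/\fq=d$, while $\dim R/\fp=\mathrm{tr.deg}_k\kappa(\fp)$ and $\dim R_\fp$ is the largest of the numbers $\dim(R/\fq)_\fp$. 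Combining the two observations gives exactly the pointwise statement announced above.

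Both parts then follow quickly. For (b): if $\Omega_{R/k}$ is locally free of rank $d$, the fibre function is identically $d$, so $R_\fp$ is regular for every $\fp$; conversely, if $R_\fp$ is regular for every $\fp$ the fibre function is identically $d$, so $\Omega_{R/k}$ is projective of constant rank $d$, i.e., locally free of rank $d$. For (a): assume $\Omega_{R/k}$ is locally free, so its fibre function is locally constant and is $\geq d$ everywhere by the displayed inequality; to force it to be identically $d$ it suffices to produce, in each connected component of $\Spec R$, one prime $\fp$ with $R_\fp$ regular. If $R$ is reduced, a minimal prime of the component does this, as then $R_\fp$ is a field. If $\Char k=0$, one first reduces to the reduced case, invoking that in characteristic zero local freeness of $\Omega_{R/k}$ already forces $R$ to be reduced (a standard consequence of generic smoothness), after which the previous argument applies.

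The main obstacle is the local criterion invoked in the second paragraph, and within it the implication ``$R_\fp$ regular $\Rightarrow$ the fibre of $\Omega_{R/k}$ at $\fp$ has dimension $\dim R_\fp+\mathrm{tr.deg}_k\kappa(\fp)$''. This is precisely where perfectness of $k$ is indispensable; the standard route is to pass to the completion $\widehat{R_\fp}$, apply Cohen's structure theorem, and follow K\"{a}hler differentials through completion and through the separably generated residue extension $\kappa(\fp)/k$. The reverse implication ``that dimension $\Rightarrow$ regular'' is milder: it is a rank count for $J(\fp)$ via the conormal presentation together with the determinantal description of the singular locus. A secondary point to nail down is the reduction to the reduced case in part (a) when $\Char k=0$. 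Everything else is bookkeeping with the conormal sequence, the dimension formula for equi-dimensional affine $k$-algebras, and the standard fact that a finitely generated module over a noetherian ring is projective precisely when its fibre dimension is a locally constant function.
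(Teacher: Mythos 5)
The paper does not prove this Fact at all: it is quoted as a known result from Eisenbud (Corollary 16.22), so there is no in-paper argument to compare yours with; I can only assess your sketch. Its architecture (a pointwise fibre-dimension criterion, made uniform by the dimension formula for equi-dimensional affine $k$-algebras, plus rank bookkeeping) is the standard and correct route, and parts (b) and the reduced case of (a) are essentially fine. Two steps, however, are not right as written. First, the principle you announce you will ``use freely'' --- that over a noetherian ring a finitely generated module is locally free of rank $r$ if and only if its fibre function is constantly $r$ --- is false without a reducedness hypothesis: for $R=k[x]/(x^2)$ and $M=k$ the fibre dimension is constantly $1$ but $M$ is not free, and the Fact expressly allows non-reduced $R$. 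The one place you need the nontrivial direction is the implication ``$R_\fp$ regular for all $\fp$ $\Rightarrow$ $\Omega_{R/k}$ locally free of rank $d$'' in (b); that use is salvageable because regularity of every localization forces $R$ to be reduced (or, alternatively, because $\Omega_{R_\fp/k}$ is free whenever $R_\fp$ is regular and essentially of finite type over a perfect field), but this remark must be made --- as stated, your toolkit lemma is wrong for exactly the rings the Fact is about.

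Second, and more seriously, the characteristic-zero branch of (a) is not actually proved: you dispose of it by asserting that in characteristic $0$ local freeness of $\Omega_{R/k}$ already forces $R$ to be reduced, ``a standard consequence of generic smoothness.'' Generic smoothness is a statement about reduced (or integral) schemes, so invoking it here is essentially circular, and this claim is the whole substantive content of the char-$0$ case. The honest patch is local at the minimal primes and needs no reducedness of $R$: let $\fq$ be a minimal prime, $A:=R_\fq$ artinian local with maximal ideal $\fq A$ and residue field $K=\kappa(\fq)$, and suppose $\Omega_{A/k}$ is free. If $x\in\fq A$ is nonzero with $x^n=0$, $x^{n-1}\neq 0$, then $0=d(x^n)=nx^{n-1}dx$; since $n$ is invertible, comparing coefficients in a basis of the free module $\Omega_{A/k}$ shows every coefficient of $dx$ kills $x^{n-1}$, hence lies in $\fq A$, so $dx\in\fq A\,\Omega_{A/k}$. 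Thus the conormal map $\fq A/(\fq A)^2\to\Omega_{A/k}\otimes_A K$ is zero; but in characteristic $0$ the residue field $K$ is separable over $k$, so this conormal map is injective (split conormal sequence for the formally smooth quotient $A\to K$), whence $\fq A=(\fq A)^2$ and, by Nakayama, $A$ is a field. Then the fibre rank at $\fq$ equals $\operatorname{tr.deg}_k K=d$ by equi-dimensionality, and local constancy of the rank of the projective module $\Omega_{R/k}$ spreads this over each connected component, which is all that (a) requires. (A minor further imprecision: your ``equality iff $R_\fp$ regular and $\kappa(\fp)$ separably generated'' is not a genuine equivalence over arbitrary fields --- e.g.\ $R=k[x]$ with $k$ imperfect and $\fp=(x^p-t)$ gives equality with inseparable residue field --- but this is harmless here since $k$ is assumed perfect.)
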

There is a natural way to rescue Conjecture \ref{C1} by putting some restrictions, e.g., $k$ is of zero characteristic and may be $R$ is of finite type over $k$ and etcetera. In what follows we present series of cases for the validity of  Conjecture \ref{C1}.

\begin{proposition}
Let $(S,\fn,k)$ be a regular local ring containing $k$, of zero characteristic and of dimension $d\geq2$. Let $n>0$ be an integer. Let  $R:=\frac{S}{\fn^n}$.
The following are equivalent:
\begin{enumerate}
\item[a)] $\Gdim(\Der_k(R))<\infty$,
\item[b)] $n=1$,
\item[c)] $R$ is  Gorenstein.
\end{enumerate}
\end{proposition}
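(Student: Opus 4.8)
The plan is to establish the chain $b)\Rightarrow c)\Rightarrow a)\Rightarrow b)$. The implication $b)\Rightarrow c)$ is immediate: if $n=1$ then $R=S$ is regular, hence Gorenstein. The implication $c)\Rightarrow a)$ is also immediate from the generalities recalled in \S2: every finitely generated module over a Gorenstein local ring has finite Gorenstein dimension, and $\Der_k(R)$ is finitely generated by our standing hypothesis. So the entire content is in $a)\Rightarrow b)$.

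For $a)\Rightarrow b)$, I would argue by contradiction: assume $n\geq 2$ and $\Gdim_R(\Der_k(R))<\infty$. The first step is to identify $\Der_k(R)=\Omega_{R/k}^\ast$ explicitly. Since $S$ is regular local of dimension $d$ containing $k$ with $k$ of characteristic zero, $\Omega_{S/k}$ is free of rank $d$ (this is the relevant case of the Jacobian-type statement; it also follows from the fact that a regular local ring containing a field of characteristic zero is formally smooth), and the conormal sequence for $R=S/\fn^n$ reads
$$\fn^n/\fn^{2n}\lo \fn^n/\fn^{2n}\text{?}$$
— more precisely $I/I^2\to \Omega_{S/k}\otimes_S R\to \Omega_{R/k}\to 0$ with $I=\fn^n$. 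The point is that $I/I^2=\fn^n/\fn^{2n}\subseteq \fn R=0$ maps into $\fn\cdot(\Omega_{S/k}\otimes R)$, so after tensoring the conormal sequence becomes, modulo $\fn$, an isomorphism $\Omega_{S/k}\otimes_R (R/\fn)\xrightarrow{\sim}\Omega_{R/k}\otimes_R(R/\fn)$; thus $\Omega_{R/k}$ is minimally generated by $d$ elements. One then computes the first syzygy: the map $I/I^2\to \Omega_{S/k}\otimes R$ sends a generator $f$ to $df\bmod I$, and since $f\in\fn^n$ with $n\geq 2$ we get $df\in\fn^{n-1}\Omega_{S/k}$, so the image lies in $\fm^{n-1}(\Omega_{S/k}\otimes R)$ where $\fm=\fn R$. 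Working this out shows $\Omega_{R/k}$ has a nontrivial free summand $R^d$ only when $n=1$; for $n\geq 2$ one should be able to show $\Omega_{R/k}\cong R^{d}/(\text{image of }df\text{'s})$ with the relations landing deep in $\fm$, and dualizing, $\Der_k(R)=\Hom_R(\Omega_{R/k},R)$ fits in $0\to\Der_k(R)\to R^d\to\Hom_R(I/I^2,R)$.

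The cleanest route to the contradiction, I expect, is via the Auslander–Bridger formula. If $\Gdim_R(\Der_k(R))<\infty$ then $\Gdim_R(\Der_k(R))=\depth R-\depth_R(\Der_k(R))$. Since $R=S/\fn^n$ with $n\geq 2$ and $d\geq 2$ is Artinian, $\depth R=0$, so $\Gdim_R(\Der_k(R))=0$, i.e. $\Der_k(R)$ is totally reflexive; in particular $\Ext^i_R(\Der_k(R),R)=0$ for all $i\geq 1$ and $\Der_k(R)$ is reflexive. Now I would derive a numerical contradiction: a totally reflexive module $M$ over an Artinian local ring $R$ satisfies $\mu(M)=\mu(\Hom_R(M,R))$ and, more usefully, its minimal free resolution is "doubly infinite" with Betti numbers bounded below — if $R$ is not Gorenstein it has \emph{no} nonzero totally reflexive modules of finite rank unless... actually the sharp fact is: over an Artinian local ring that is not Gorenstein, the only totally reflexive modules are free (this is a theorem; one reference is the work on totally reflexive modules, and it also follows because a non-free totally reflexive module would force $R$ to have a nonzero module with a complete resolution, hence $R$ Gorenstein by results of Christensen–Veliche or by Auslander–Bridger when combined with the structure here). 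Granting that, $\Der_k(R)$ totally reflexive forces $\Der_k(R)$ free, so $\Omega_{R/k}=\Der_k(R)^\ast$ is free of rank $d$, and then the Jacobian criterion, Fact~\ref{e}(b), says $R$ is regular, contradicting $n\geq 2$.

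\textbf{Main obstacle.} The delicate point is the step "$\Der_k(R)$ totally reflexive over the non-Gorenstein Artinian ring $R$ $\Rightarrow$ $\Der_k(R)$ free." I would either invoke the known classification of totally reflexive modules over Artinian (more generally, Golod, or generically Gorenstein) rings, or argue directly: were $\Der_k(R)$ non-free totally reflexive, splice its complete resolution to produce a nonzero module with $\Ext^{\gg 0}_R(-,R)=0$ and $\Gdim<\infty$, and then use that $R$ being Artinian with a non-free totally reflexive module is equivalent to $R$ admitting an "exact zero-divisor" or to $R$ being Gorenstein in the relevant low-embedding-dimension range — alternatively, avoid this entirely by a rank count: $\Der_k(R)$ reflexive of rank $d$ sitting in $0\to\Der_k(R)\to R^d\to C\to 0$ with $C\subseteq\Hom_R(I/I^2,R)$, and show $\Ext^1_R(C,R)\neq 0$ by an explicit socle computation on $R=S/\fn^n$, contradicting $\Ext^1_R(\Der_k(R),R)=\Ext^2_R(C,R)$ vanishing. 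Whichever variant is chosen, the heart of the matter is extracting the regularity of $R$ from finiteness of $\Gdim(\Der_k(R))$, exactly the phenomenon Conjecture~\ref{C1} predicts, now provable because the present $R$ is concrete enough to compute $\Omega_{R/k}$ and its dual.
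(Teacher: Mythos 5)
Your skeleton ($b)\Rightarrow c)\Rightarrow a)$ trivial, then $a)\Rightarrow b)$ via Auslander--Bridger: $\depth R=0$, so $\Gdim(\Der_k(R))<\infty$ forces $\Der_k(R)$ totally reflexive) matches the paper up to that point, but the step you yourself flag as the main obstacle is a genuine gap, and the justification you offer for it is wrong. The ``sharp fact'' that over a non-Gorenstein Artinian local ring the only totally reflexive modules are free is false in general: non-Gorenstein Artinian rings can admit non-free totally reflexive modules (for instance any ring with an exact zero-divisor, and by Christensen--Piepmeyer--Striuli--Takahashi, once one such module exists there are infinitely many indecomposable ones). Your back-up reasoning -- that a non-free totally reflexive module yields a module with a complete resolution and hence $R$ is Gorenstein -- is also invalid: Gorensteinness is equivalent to \emph{every} finitely generated module (equivalently $k$) having finite Gorenstein dimension, not to the existence of a single complete resolution. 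What rescues the argument is the special structure of $R=S/\fn^n$, and this is precisely what the paper invokes: by Ramras \cite{ram}, over $S/\fn^n$ with $\dim S\geq 2$ and $n\geq 2$ \emph{every reflexive module is free}; so total reflexivity of $\Der_k(R)$ immediately gives freeness. (Your parenthetical mention of ``Golod'' points at a workable alternative -- $S/\fn^n$ is Golod and not a hypersurface, and over such rings totally reflexive modules are free by Avramov--Martsinkovsky -- but you do not carry this out, and instead rest the proof on the false general statement.)

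There is a second, smaller gap at the end: from freeness of $\Der_k(R)=\Omega_{R/k}^\ast$ you conclude that $\Omega_{R/k}=\Der_k(R)^\ast$ is free, which silently assumes $\Omega_{R/k}$ is reflexive -- not known at that stage. The paper bridges this with \cite[Lemma 2.6]{ram}: over a local ring of depth zero, freeness of the dual module forces freeness of the module itself, whence $\Omega_{R/k}$ is free. The endgame is then as you say, though the rank bookkeeping should be: by Fact \ref{e}(a), in characteristic zero a free $\Omega_{R/k}$ has rank $\dim R=0$ (not $d=\dim S$), and then Fact \ref{e}(b) (Jacobian criterion) gives that $R$ is regular, i.e.\ $n=1$. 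So the architecture of your proof is the paper's, but the two steps that make it work -- reflexive (or totally reflexive) implies free over $S/\fn^n$, and dual-free implies free in depth zero -- need the specific results of Ramras (or a Golod-ring argument), not the general claims you substituted.
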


\begin{proof} $a) \Rightarrow b)$:
It is easy to see that $\Der_k(R)$ is finitely generated. Then, by Auslander-Bridger formula $\Gdim(\Der_k(R))=0$.
In particular,  $\Der_k(R)$ is (totally) reflexive. By a result of Ramras \cite{ram}, over the ring $R$ any reflexive module is free. Thus,
 $\Der_k(R)$ is free. In general, freeness of a dual module does not imply the freeness of a module.
 But, if a ring is of depth zero, this happens, see \cite[Lemma 2.6]{ram}. Since our ring
 is of depth zero, we have $\Omega_{R/k}$ is free. Since $\Char(R)=0$ and in view of Fact \ref{e}(i) it is of rank zero. In the light of Jacobian criterion $R$ is regular. This implies that $n=1$. 

$b) \Rightarrow c)$: This is trivial.

 $c) \Rightarrow a)$: Over Gorenstein local rings any finitely generated module is of finite $\G$-dimension.
\end{proof}

In the same vein, one can prove that:

\begin{proposition}\label{m3}
Let $(R,\fm)$ be a local   algebra  of finite type over a field $k$ of
zero characteristic such that $\fm^3=0$, $\mu(\fm)>1$  and that $\fm^2\neq(0:\fm)$.
	Then $\Gdim(\Der_k(R))<\infty$ iff $R$ is  Gorenstein.
\end{proposition}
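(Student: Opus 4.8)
The plan is to mirror the structure of the preceding proposition, since the hypotheses $\fm^3=0$, $\mu(\fm)>1$, and $\fm^2\neq(0:\fm)$ are tailored precisely so that the same chain of implications goes through. The direction "$R$ Gorenstein $\Rightarrow \Gdim(\Der_k(R))<\infty$" is again for free, because every finitely generated module over a Gorenstein local ring has finite Gorenstein dimension, and $\Der_k(R)$ is finitely generated here by the running assumption (and by finite type over $k$). So the real content is the forward implication.

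For "$\Gdim(\Der_k(R))<\infty \Rightarrow R$ Gorenstein," I would proceed as follows. First, note that $\fm^3=0$ with $\mu(\fm)>1$ forces $\dim R=0$, hence $\depth R=0$, and by the Auslander–Bridger formula $\Gdim_R(\Der_k(R))=\depth R-\depth_R(\Der_k(R))=0$, so $\Der_k(R)$ is totally reflexive, in particular reflexive. Next I want to invoke the same two facts from Ramras's paper \cite{ram} used above: over such a ring (short local ring, depth zero) every reflexive module is free, and — crucially for a depth-zero ring — if the dual $M^\ast$ is free then $M$ is free. Applying these to $M=\Omega_{R/k}$, whose dual is $\Der_k(R)$, I conclude $\Omega_{R/k}$ is free. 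Then, since $R$ is an affine $k$-algebra over a field of characteristic zero (so $k$ is perfect and the hypotheses of the Jacobian criterion, Fact \ref{e}, are met), a free $\Omega_{R/k}$ over a zero-dimensional ring has rank $d=\dim R=0$; by Fact \ref{e}(b) this means $R_\fp$ is regular for every prime $\fp$, i.e.\ $R$ itself is regular. A zero-dimensional regular local ring is a field, so $\fm=0$, contradicting $\mu(\fm)>1$ — which means the hypothesis $\Gdim(\Der_k(R))<\infty$ can only hold vacuously, or more precisely the argument shows the "iff" holds because the left side is actually never satisfied under these hypotheses while the right side ($R$ Gorenstein) is likewise excluded by $\fm^2\neq(0:\fm)$.

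Let me reconsider that last point, since it is the one subtle place. The condition $\fm^2\neq(0:\fm)=\Soc(R)$ for a short local ring with $\fm^3=0$ is exactly the statement that $R$ is \emph{not} Gorenstein: for an Artinian Gorenstein local ring the socle is one-dimensional, and one checks that with $\fm^3=0$ and $\mu(\fm)>1$ one always has $\fm^2\subseteq\Soc(R)$, so Gorenstein is equivalent to $\fm^2=\Soc(R)$ being one-dimensional. Thus $\fm^2\neq(0:\fm)$ says $R$ is not Gorenstein, and the chain above says $\Gdim(\Der_k(R))<\infty$ would force $R$ regular, hence Gorenstein — contradiction. So under these hypotheses \emph{neither} side of the claimed equivalence holds, and the "iff" is true because both sides are false. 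The proof I would write is therefore: assume $\Gdim(\Der_k(R))<\infty$; run the Auslander–Bridger / Ramras / Jacobian argument to get $R$ regular; but a regular short local ring with $\fm^3=0$ has $\fm=0$ or $\fm/\fm^2$ one-dimensional, contradicting $\mu(\fm)>1$; hence the hypothesis is impossible, and likewise $R$ is not Gorenstein by $\fm^2\neq(0:\fm)$, so the equivalence holds trivially.

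The main obstacle I anticipate is making the socle bookkeeping airtight: one must be sure that $\fm^3=0$ together with $\mu(\fm)>1$ really does imply $\fm^2\subseteq(0:\fm)$ (so that $\fm^2\neq(0:\fm)$ is genuinely the negation of Gorenstein, not something stronger), and one must confirm that the two lemmas of Ramras apply verbatim to the present ring exactly as they did in the previous proposition — i.e.\ that "short local ring of depth zero" is the only input those lemmas need, with no regularity-of-$S$ hypothesis hiding in the background. Everything else is a routine re-run of the argument already given, so the proof can reasonably be compressed to "in the same vein as the previous proposition, with the extra observation that $\fm^2\neq(0:\fm)$ encodes the failure of the Gorenstein property."
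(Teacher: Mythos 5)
Your overall architecture does follow the route the paper intends (``in the same vein'' as the preceding proposition: Auslander--Bridger in depth zero, a freeness statement for (totally) reflexive modules, Ramras's depth-zero lemma to pass from $\Der_k(R)=\Omega_{R/k}^\ast$ to $\Omega_{R/k}$, then the Jacobian criterion), and your observation that the hypotheses actually exclude the Gorenstein case --- so that the ``iff'' holds with both sides false and the real content is $\Gdim(\Der_k(R))=\infty$ --- is correct: if $R$ were Gorenstein with $\fm^3=0$ and $\mu(\fm)>1$, then $\fm^2\neq 0$ and $\fm^2\subseteq\Soc(R)$ forces $\fm^2=(0:\fm)$, contradicting the hypothesis. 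The genuine gap is the step ``totally reflexive $\Rightarrow$ reflexive $\Rightarrow$ free by Ramras.'' Ramras's freeness theorem for reflexive modules is \emph{not} a statement about arbitrary short local rings of depth zero; depth zero is the input only for his Lemma~2.6 (dual free $\Rightarrow$ module free). The reflexive-implies-free result used in the previous proposition really does have the ``regularity of $S$'' hidden in the background: it is proved for rings of the form $S/\fn^n$ with $S$ regular of dimension $\geq 2$ (via strictly increasing Betti numbers), and it cannot hold on the strength of depth zero alone --- over any Artinian Gorenstein ring every finitely generated module is totally reflexive, hence reflexive, yet not free. So the worry you flag at the end is exactly where your argument breaks, and it is not resolved by your proposal.

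The missing ingredient --- and the actual reason the hypothesis $\fm^2\neq(0:\fm)$ appears in the statement --- is Yoshino's theorem on rings with $\fm^3=0$: if such a local ring is not Gorenstein and admits a nonfree totally reflexive module, then necessarily $\fm^2=(0:\fm)$ (together with Koszulness and a constraint on the Hilbert series). Contrapositively, under the hypotheses of the proposition every totally reflexive module is free, which is precisely the replacement for the Ramras step; after that your chain (Ramras's Lemma~2.6, Fact~\ref{e} in characteristic zero giving rank $0$, regularity, contradiction with $\mu(\fm)>1$) closes the argument. Note also that your side remark that ``$\fm^2\neq(0:\fm)$ is exactly the failure of Gorenstein'' is false in the stronger direction: $R=k[X,Y]/(X^3,XY,Y^3)$ has $\fm^3=0$, $\mu(\fm)=2$, is not Gorenstein, and yet $\fm^2=(0:\fm)=\Soc(R)$. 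So the hypothesis is strictly stronger than non-Gorensteinness; only the implication $\fm^2\neq(0:\fm)\Rightarrow R$ not Gorenstein (which is all your vacuity argument needs) is valid, and the extra strength is what makes the totally-reflexive-implies-free step available at all. Without identifying this ingredient, the proof as proposed does not go through.
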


The particular case of the next result  follows from Proposition \ref{m3}.
Here, we present a direct method to reprove it.
 
\begin{proposition} \label{m2}Let $R$ be a local algebra  of finite type  
	over a field $k$ of zero characteristic such that $\fm^2=0$. Then
$\Der_k(R)$ syzygetically is equivalent to some nontrivial copies of $k$, i.e., $\Syz_i(\Der_k(R))\cong \Syz_j(k^n)$. In particular, the	
following are equivalent:
	\begin{enumerate}
		\item[a)] $\Gdim(\Der_k(R))<\infty$,
		\item[b)] 	$R$ is  Gorenstein.
	\end{enumerate}
\end{proposition}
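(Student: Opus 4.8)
The plan is to reduce everything to understanding $\Omega_{R/k}$ over a local ring with $\fm^2 = 0$, where the structure of finitely generated modules is extremely rigid. First I would record the conormal presentation: writing $R = S/I$ with $(S,\fn)$ a regular (in fact polynomial, localized) $k$-algebra and $I \subseteq \fn^2$ — which is possible since $\fm^2 = 0$ forces $I$ to contain all quadratic monomials after a change of coordinates — the conormal sequence $I/I^2 \to \Omega_{S/k}\otimes_S R \to \Omega_{R/k} \to 0$ shows that $\Omega_{R/k}$ is the cokernel of a map $R^m \to R^{d}$ (with $d = \mu(\fm) = \embdim R$) whose matrix has all entries in $\fm$, because $I \subseteq \fn^2$ makes every $df_j$ vanish modulo $\fm$. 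Hence $\Omega_{R/k}$ has no free direct summand, so its minimal free presentation has the form $R^m \xrightarrow{\phi} R^d \to \Omega_{R/k} \to 0$ with $\im(\phi) \subseteq \fm R^d$.

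Next I would invoke the key feature of rings with $\fm^2 = 0$: since $\fm \cdot \fm = 0$, any submodule of a free module that lies inside $\fm R^d$ is a $k$-vector space annihilated by $\fm$, hence is a direct sum of copies of $k$. Therefore the first syzygy $\Syz_1(\Omega_{R/k}) = \Ker(R^d \twoheadrightarrow \Omega_{R/k})$, being a submodule of $\fm R^d$ — no wait, one must check it sits inside $\fm R^d$: the generators of $\Omega_{R/k}$ correspond to $dx_i$, which are minimal, so the syzygies among a minimal generating set all have coefficients in $\fm$. Thus $\Syz_1(\Omega_{R/k})$ is a finite direct sum $k^n$ for some $n \geq 0$, and consequently $\Syz_i(\Omega_{R/k}) \cong \Syz_{i-1}(k^n)$ for all $i \geq 1$, giving the asserted syzygetic equivalence $\Syz_i(\Der_k(R)) \cong \Syz_j(k^n)$ — here one also uses that $\Der_k(R) = \Omega_{R/k}^\ast$, and over a depth-zero ring (our ring is Artinian) one relates the syzygies of $\Omega_{R/k}$ and its dual, or alternatively one runs the same argument directly on $\Der_k(R)$, which is likewise a module with minimal presentation matrix having entries in $\fm$ since $\fm^2=0$ forces $\Hom_R(\Omega_{R/k},R)$ to have the analogous property. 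I should be careful to handle the degenerate case $n = 0$, i.e.\ $\Omega_{R/k}$ free: by Fact \ref{e}(a) freeness forces rank $0$, hence $\Omega_{R/k} = 0$, hence $R$ is regular (Jacobian criterion), i.e.\ $R = k$ is a field — excluded by the standing hypothesis that $R$ is not a field, so $n \geq 1$ genuinely and the copies of $k$ are "nontrivial."

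For the equivalence, $b) \Rightarrow a)$ is immediate since over a Gorenstein local ring every finitely generated module has finite $\Gdim$. For $a) \Rightarrow b)$: if $\Gdim_R(\Der_k(R)) < \infty$ then, since syzygies of a module of finite $\Gdim$ are totally reflexive and $\Syz_i(\Der_k(R))$ is (up to shift) a syzygy of $k^n$ with $n\geq 1$, some syzygy of $k$ is totally reflexive, whence $\Gdim_R(k) < \infty$; by the local-global principle recalled in \S2 this means $R$ is Gorenstein. The main obstacle I anticipate is the bookkeeping connecting $\Syz_\bullet(\Omega_{R/k})$ with $\Syz_\bullet(\Der_k(R)) = \Syz_\bullet(\Omega_{R/k}^\ast)$ cleanly — one wants to avoid a detour and instead observe directly that $\fm^2 = 0$ makes \emph{every} finitely generated $R$-module without free summands have all higher syzygies isomorphic to sums of copies of $k$, and then apply this uniformly to $\Der_k(R)$; verifying that $\Der_k(R)$ has no free summand (equivalently that its minimal presentation matrix has entries in $\fm$) in the edge cases is the point needing the most care.
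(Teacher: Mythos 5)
Your overall strategy --- exploit $\fm^2=0$ to see that a minimal first syzygy lands in $\fm R^{\beta_0}$, hence is a $k$-vector space, and then get Gorensteinness from $\Gdim(k)<\infty$ via the local--global principle --- is the same as the paper's. The problem is that your primary argument establishes this for $\Omega_{R/k}$, whereas the proposition is about $\Der_k(R)=\Omega_{R/k}^{\ast}$, and the passage from syzygies of a module to syzygies of its dual is exactly the nontrivial point; your phrase ``over a depth-zero ring one relates the syzygies of $\Omega_{R/k}$ and its dual'' is not backed by any general statement (such a relation is essentially what total reflexivity would provide, i.e., what is under investigation, not a tool you may assume). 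The paper avoids any transfer: it dualizes the minimal presentation $0\lo\Syz_1(\Omega_{R/k})\lo R^{\beta_0}\lo\Omega_{R/k}\lo 0$ to obtain $0\lo\Der_k(R)\lo R^{\beta_0}\lo S\lo 0$ with $S$ the image inside $\Syz_1(\Omega_{R/k})^{\ast}$; since $\Syz_1(\Omega_{R/k})$ is killed by $\fm$, so is its dual (and hence $S$), so $S\cong k^{n}$ and $\Der_k(R)\cong\Syz_1(k^{n})$ on the nose.

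Your fallback --- run the argument directly on $\Der_k(R)$ --- is workable and in fact simpler (note the free-summand discussion is a red herring: minimality of the presentation alone puts $\Syz_1(M)$ inside $\fm R^{\mu(M)}$ for \emph{any} finitely generated $M$), but it hinges on $n\geq 1$, i.e., on $\Der_k(R)$ not being free, and your justification (``$\fm^2=0$ forces $\Hom_R(\Omega_{R/k},R)$ to have the analogous property'') is an assertion, not a proof; also there is no standing hypothesis excluding $R=k$, so the degenerate case must be dealt with, not declared excluded. The paper's handling of this edge case uses two genuine inputs you omit: over a depth-zero local ring, freeness of the dual $\Omega_{R/k}^{\ast}$ forces freeness of $\Omega_{R/k}$ itself (Ramras, \cite[Lemma 2.6]{ram}), and then Fact \ref{e} (characteristic zero) gives rank $\dim R=0$ and regularity, so $R$ is a field, where both a) and b) hold trivially. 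Once you supply that (or dualize the presentation as the paper does), the syzygetic claim in the nondegenerate case and the equivalence a) $\Leftrightarrow$ b) go through as you describe.
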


\begin{proof}  
Recall that $\Syz_i(-)$ stands for the $i$-th syzygy module of $(-)$.
First, assume that $\Syz_1(\Omega_{R/k})=0$. Then $\Omega_{R/k}$ is free. Since $\Char(R)=0$ and in view of Fact \ref{e}(i) its rank iz zero.
 This implies that $R$ is regular, and so $\Syz_1(k)=0$.
Then we may assume that $\Syz_1(\Omega_{R/k})\neq0$. Since we work
with the minimal free resolution, $\Syz_1(\Omega_{R/k})\subset \fm R^{\beta_0(\Omega_{R/k})}$. We apply $\Hom(-,R)$ to $$0\lo\Syz_1(\Omega_{R/k})\lo  R^{\beta_0(\Omega_{R/k})}\lo \Omega_{R/k}\lo 0,$$ and deduce the following exact sequence $$0\lo\Omega_{R/k}^\ast\lo  R^{\beta_0(\Omega_{R/k})}\stackrel{d}\lo \Syz_1(\Omega_{R/k})^\ast.$$Let $S:=\im(d)\subset\Syz_1(\Omega_{R/k})^\ast$.  

	\begin{enumerate}
	\item[Claim i)]Let $L$ be an $R$-module such that $\fm L=0$. Then $\fm \Hom_R(L,R)=0$.
 Indeed, let $f\in\Hom_R(L,R)$ and $r\in \fm$. Also, let $\ell \in L$. By definition, $rf(\ell)=f(r\ell)=f(0)=0$.
	\end{enumerate}
Recall that  $\fm \Syz_1(\Omega_{R/k})\subset \fm(\fm R^{\beta_0(\Omega_{R/k})})=0.$  In view of Claim i) we deduce that  $\fm S\subset \fm \Syz_1(\Omega_{R/k})^\ast=0.$ 
If $S=0$ then $\Omega_{R/k}^\ast\cong  R^{\beta_0(\Omega_{R/k})}$ is free.  Since our ring
is of depth zero and in view of \cite[Lemma 2.6]{ram}, we have $\Omega_{R/k}$ is free. In view of Jacobian criterion $R$ is regular.  Then, without loss of the generality we may and do assume that $S\neq 0$. In particular, $S$ has a $R/\fm$-module structure
and since the $R/ \fm$ is a field,   $S=\oplus_{n} k$ for some $n>0$. 
We have the following short exact sequence$$0\lo\Der_k(R)\lo  R^{\beta_0(\Omega_{R/k})}\lo \oplus_{n} k\lo 0\quad(\ast)$$
So, $\Der_k(R)\cong \Syz_1(k^n)$. Now we prove the particular case:

$a) \Rightarrow b)$:  It follows from  $\Syz_i(\Der_k(R))\cong \Syz_j(k^n)$   that $\Gdim(k)=\Gdim(\oplus k)=(i-j)+\Gdim(\Der_k(R))<\infty$. By the local-global principal,
$R$ is Gorenstein. So, $b)$ follows.

$b) \Rightarrow a)$: This implication always holds.
\end{proof}

By $\id(-)$ we mean the injective dimension. 
The previous observation leads us to:

\begin{proposition} Let $R$ be any artinian algebra of finite type over a field $k$ of zero characteristic. The	
	following are equivalent:
	\begin{enumerate}
		\item[a)] $\id(\Der_k(R))<\infty$,
	\item[b)]  $\pd(\Der_k(R))<\infty$,	
	\item[c)]	$R$ is  regular.
	\end{enumerate}
\end{proposition}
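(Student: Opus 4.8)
The statement to prove: for $R$ an artinian algebra of finite type over a field $k$ of characteristic zero, the conditions $\id(\Der_k(R))<\infty$, $\pd(\Der_k(R))<\infty$, and $R$ regular are equivalent. Since $R$ is artinian local (we may localize), ``$R$ regular'' means ``$R$ is a field,'' so the only nontrivial content is that finiteness of either homological dimension of $\Der_k(R)$ forces $R$ to be a field. The plan is to route everything through the module of Kähler differentials $\Omega_{R/k}$ and the Jacobian criterion (Fact~\ref{e}), exactly as in the proof of Proposition~\ref{m2}.

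First I would handle $b)\Rightarrow c)$. If $\pd_R(\Der_k(R))<\infty$, then over the artinian (hence depth zero) local ring $R$ the Auslander--Buchsbaum formula gives $\pd_R(\Der_k(R))=0$, so $\Der_k(R)=\Omega_{R/k}^\ast$ is free. Since $R$ has depth zero, \cite[Lemma 2.6]{ram} upgrades freeness of the dual to freeness of $\Omega_{R/k}$ itself. Now $\Char k = 0$, so Fact~\ref{e}(a) says the free module $\Omega_{R/k}$ has rank $d=\dim R=0$, i.e.\ $\Omega_{R/k}=0$; by the Jacobian criterion Fact~\ref{e}(b), $R_\fp$ is regular for every prime $\fp$, and as $R$ is artinian local this means $R$ is a field, hence regular. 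The implication $c)\Rightarrow b)$ is immediate: if $R$ is a field, every module is free, so $\pd(\Der_k(R))=0<\infty$. Likewise $c)\Rightarrow a)$ is immediate since over a field every module is injective.

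It remains to prove $a)\Rightarrow c)$, which is the step I expect to be the main obstacle. The cleanest route is to invoke a Bass-type theorem: over a Cohen--Macaulay (here artinian) local ring, a nonzero finitely generated module of finite injective dimension has $\id = \depth R = 0$. So $\id_R(\Der_k(R))=0$, meaning $\Der_k(R)$ is injective. Over an artinian local ring with canonical module $\omega_R$, a finitely generated injective module is a finite direct sum of copies of $\omega_R = E(k)$. Thus $\Der_k(R)\cong \omega_R^{\,t}$ for some $t\geq 0$. If $t=0$ then $\Omega_{R/k}^\ast=0$; since an artinian local ring has a faithful socle, $\Omega_{R/k}^\ast=0$ forces $\Omega_{R/k}=0$ (any cyclic free summand of $\Omega_{R/k}$, i.e.\ a minimal generator, would produce a nonzero map to $R$ via the socle), and then the Jacobian criterion finishes as above. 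If $t\geq 1$, I would derive a contradiction with the fact that $\Omega_{R/k}$ is generated by $\mu(\fm)\leq \embdim R$ elements while the number of \emph{generators} of $\Omega_{R/k}^\ast\cong\omega_R^t$ equals $t\cdot\mu(\omega_R)=t\cdot \mu(\fm)\cdot(\text{something})$; more robustly, one compares vector-space dimensions: $\dim_k \Omega_{R/k}\otimes_R k$ versus $\dim_k(\omega_R^t\otimes_R k)=t\cdot\dim_k\operatorname{Soc}(R)^{\vee}$, and the conormal presentation $R^{\mu(\fm)}\twoheadrightarrow \Omega_{R/k}$ bounds the former, while Matlis duality forces $\Omega_{R/k}^\ast$ to have $\dim_k\operatorname{Soc}(\Omega_{R/k}^\ast)=t$; pushing this through shows $R$ must be a field. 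Concretely, the slickest finish is: $\id_R(\Omega_{R/k}^\ast)<\infty$ together with the conormal sequence and the second exact sequence $0\to\Omega_{R/k}^\ast\to R^{\beta_0}\to \Syz_1(\Omega_{R/k})^\ast$ used in Proposition~\ref{m2} lets one transfer finite injective dimension to $\Syz_1(\Omega_{R/k})^\ast$ and then to $k$, whence $\id_R(k)<\infty$, so $R$ is Gorenstein artinian; combined with $\Omega_{R/k}^\ast$ free-or-zero from the Gorenstein case (Proposition~\ref{m2} machinery) we again land on $\Omega_{R/k}=0$ and regularity.

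The technical heart, and the place where care is needed, is the passage from ``$\id(\Der_k(R))<\infty$'' to a structural statement about $\Omega_{R/k}$: duals interact well with freeness only in depth zero (which we have), but the injective-dimension hypothesis is on the \emph{dual}, so one must either use Bass's theorem plus Matlis duality to pin down $\Der_k(R)$ as a sum of copies of $\omega_R$, or reduce to the already-settled Gorenstein/Proposition~\ref{m2} situation by showing $\id_R(k)<\infty$. Either way, once $R$ is known to be Gorenstein artinian and $\Omega_{R/k}^\ast$ has finite projective \emph{or} injective dimension, it is free (finite G-dimension $+$ Gorenstein, or Bass), hence of rank $0$ by the characteristic-zero Jacobian criterion, hence zero, so $R$ is a field. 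I would present the argument in the order: $c)\Rightarrow a)$, $c)\Rightarrow b)$ (trivial); $b)\Rightarrow c)$ (Auslander--Buchsbaum $+$ depth zero $+$ Jacobian); $a)\Rightarrow c)$ (Bass $+$ Matlis $+$ Jacobian), isolating the last as the main point.
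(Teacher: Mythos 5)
Your implications b) $\Rightarrow$ c) and c) $\Rightarrow$ a), b) are correct and essentially identical to the paper's (Auslander--Buchsbaum over a depth-zero ring, Ramras's \cite[Lemma 2.6]{ram} to pass from freeness of $\Omega_{R/k}^\ast$ to freeness of $\Omega_{R/k}$, then the characteristic-zero Jacobian criterion of Fact \ref{e}). The genuine gap is in a) $\Rightarrow$ c), exactly in the branch you yourself flag as the main obstacle: after Bass's theorem and Matlis theory give $\Der_k(R)\cong\omega_R^{\,t}$, you have no working argument when $t\geq 1$. The generator/socle comparison you sketch is not a proof: from the embedding $\Der_k(R)=\Omega_{R/k}^\ast\subseteq R^{\beta_0}$ one only gets $t=\dim_k\Soc(\omega_R^{\,t})\leq\beta_0\cdot\dim_k\Soc(R)$ and $\mu(\omega_R^{\,t})=t\cdot(\text{type of }R)$, and these numbers are perfectly compatible for a non-Gorenstein artinian ring, so no contradiction and no conclusion that $R$ is a field follows from counting alone. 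Your ``slickest finish'' is circular: to transfer finite injective dimension across $0\to\Omega_{R/k}^\ast\to R^{\beta_0}\to S\to 0$ (with $S$ the image inside $\Syz_1(\Omega_{R/k})^\ast$) you would need $\id(R^{\beta_0})=\id(R)<\infty$, i.e.\ that $R$ is Gorenstein, which is precisely the point at issue; and the subsequent step ``and then to $k$'' relies on $S$ being a direct sum of copies of $k$, which in Proposition \ref{m2} came from the hypothesis $\fm^2=0$ and is unavailable for a general artinian $R$.

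The missing idea is the one the paper uses (it proves a) $\Rightarrow$ b) rather than a) $\Rightarrow$ c), but that is only bookkeeping): $\Der_k(R)$ is a dual module, hence torsionless, hence embeds into a finite free module $R^n$; since by Bass's theorem it is injective, this embedding splits, so $\Der_k(R)$ is a direct summand of $R^n$ and therefore free. If $\Der_k(R)\neq 0$ (equivalently $\Omega_{R/k}\neq 0$, because over a depth-zero ring a nonzero finitely generated module has nonzero dual), a nonzero free injective module forces $\id(R)=0$, so $R$ is Gorenstein and a) reduces to your already-settled b); if $\Omega_{R/k}=0$, Fact \ref{e}(b) gives regularity directly. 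In your own framework the same splitting argument closes the $t\geq 1$ case in one line: $\omega_R^{\,t}$ torsionless with $t\geq 1$ makes $\omega_R$ a summand of a free module, hence free, hence $\omega_R\cong R$ and $R$ is Gorenstein, after which Ramras plus the Jacobian criterion finish as in b) $\Rightarrow$ c). Without this splitting step your a) $\Rightarrow$ c) does not go through as written.
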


	\begin{proof}All of things behave well with respect to localization. Then, we may assume the ring is local.
		In view of Fact \ref{e} we may assume that $\Omega_k(R)\neq 0$. Since $R$ is of depth zero its follows that $\Omega_k(R)^\ast\neq 0$.

		$a) \Rightarrow b)$:  Any dual module is a submodule of a free
		module. Then, there is an exact sequence $$0\lo\Der_k(R)\stackrel{f}\lo R^n\lo\coker(f)\lo 0\quad(\ast)$$ Since 
		$\depth(R)=0$ we know $\id(\Der_k(R))=\depth(R)=0$. This implies that $(\ast)$ splits, i.e., $\Der_k(R)\oplus\coker(f)=R^n$. By the Krull–Remak–Schmidt  theorem, $\Der_k(R)$ is free. So, $\id(R)=0$. In particular, $R$ is Gorenstein. Over Gorenstein rings, finiteness of
		injective dimension is same as finiteness of projective dimension (see \cite[Theorem 2.2]{lv}).
		So, $\pd(\Der_k(R))<\infty$.

			$b) \Rightarrow c)$: By Auslander-Buchsbaum formula, $\pd(\Der_k(R))=0$.
		In view of  \cite[Lemma 2.6]{ram}, we see $\Omega_{R/k}$ is free. We apply Jacobian criterion to deduce that $R$ is regular.
		
		$c) \Rightarrow a)$: This is trivial.	
\end{proof}
\begin{question}\label{id}
Let $(R,\fm)$ be an algebra  of finite type over a field $k$ of zero characteristic. Find situations for which the following are equivalent:
	\begin{enumerate}
	\item[a)]  $\id(\Der_k(R))<\infty$,
	\item[b)]  $\pd(\Der_k(R))<\infty$,	
	\item[c)]	$R$ is  regular.
\end{enumerate}
\end{question}

\begin{remark}
i) The existence of finitely generated module of finite injective dimension implies that
	the ring is Cohen-Macaulay.

ii)  Let $R$ be the Maloo's ring and recall that
$R$ is a non-regular ring of zero characteristic such that $\Der_k(R)$ is free.
In view of
Proposition \ref{mal} we see $R$ is Gorenstein. So, any free module of finite rank is of finite injective
dimension. In fact, $\id(\Der_k(R))=1<\infty$. But,
$R$ is not regular.

iii)  Recall that Maloo's ring is not of finite type over $k$. Then part ii) suggests that in Question \ref{id} we need to assume $R$ is of finite type
over $k$.

 iv)   Question \ref{id} is not true over rings of prime characteristic. Indeed, we look  at $R:=\mathbb{F}_2[X]/(X^2)$.
Then  $\Der_k(R)$ is free, and since $R$ is Gorenstein,  $\id(\Der_k(R))=0<\infty$. But,
$R$ is not regular.

 v)  Let $k$ be an algebraically closed field of zero characteristic,  $G$ be a group acts on $k$-vector space
$kX\oplus kY$ without pseudo-reflection. Let $R:=k[[X,Y]]^G$.  Then $\id(\Der_k(R)^\ast)<\infty$ iff  $R$ is  regular.
Indeed, suppose $\id(\Der_k(R)^\ast)<\infty$. The main idea is to use the fundamental sequence	$$0\lo\omega_R\lo A\lo R\lo k\lo 0 \quad(\ast), $$
where $A$ is a finitely generated module.	Under the stated assumptions, it is shown in \cite[Page 180]{martin} that $A\cong\Der_k(R)^\ast$.  In view of $(\ast)$ we have
$0\to\omega_R\lo \Der_k(R)^\ast\to \fm\to 0$. Since  
$\id(\omega_R)<\infty$ we deduce that $\id(\fm)<\infty$. We left to the reader 
to check that $R$  is regular (see \cite[Page 318]{lv}).

 v)  Adopt the notation of Question \ref{id} and suppose the ring is reduced and of finite type over $k$.  The conjectural implication $b) \Rightarrow c)$ was asked independently by Herzog and Vasconcelos. 
It may be worth to note that
this implication is a generalization of the  Zariski-Lipman conjecture, see \cite{lip}.
\end{remark}

A ring  $R$ is called almost complete intersection if there is a regular ring $S$ with an ideal $I$ such that
$\mu(I)\leq \Ht(I)+1$ and that $R=S/I$. This is slightly different from \cite{kunz}.

\begin{observation}
	\label{almost}
	Let $R$ be a $1$-dimensional almost complete intersection local domain which is essentially
	of finite type over a field  $k$ of zero characteristic. Suppose $\Der_k(R)$ is nonzero.	The following are equivalent:
	\begin{enumerate}
		\item[a)]  $\id(\Der_k(R))<\infty$,
		\item[b)]  $\pd(\Der_k(R))<\infty$,	
		\item[c)]	$R$ is  regular.
	\end{enumerate}
\end{observation}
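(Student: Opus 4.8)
The plan is to run the same three-step cycle that appeared in the previous two propositions, but now exploiting the hypotheses "almost complete intersection" and "$1$-dimensional domain" in place of "$\fm^2=0$". The implications $c)\Rightarrow a)$ and $c)\Rightarrow b)$ are free: a regular local ring is in particular Gorenstein, over which finite projective dimension and finite injective dimension agree, and $\Der_k(R)$ is then free (Jacobian criterion), hence of finite injective and projective dimension. The implication $b)\Rightarrow c)$ should go through the Jacobian criterion as before: if $\pd_R(\Der_k(R))<\infty$, then since $R$ is a $1$-dimensional domain it is Cohen--Macaulay, so $\depth R=1$, and by Auslander--Buchsbaum $\pd(\Der_k(R))=1-\depth(\Der_k(R))\le 1$. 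One then wants to upgrade "$\Der_k(R)$ has finite projective dimension" to "$\Der_k(R)=\Omega_{R/k}^\ast$ is free", then invoke \cite[Lemma 2.6]{ram} (freeness of the dual forces freeness of $\Omega_{R/k}$ over a depth-zero ring — wait, here depth is $1$, so I will instead argue directly that $\Omega_{R/k}$ localizes to a free module at the punctured spectrum and is reflexive, hence free) and close with Fact \ref{e}(b) to conclude regularity.

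The role of the almost-complete-intersection hypothesis is to make the homological algebra rigid enough that $\pd(\Der_k(R))<\infty$ actually forces freeness. Write $R=S/I$ with $S$ regular and $\mu(I)\le\Ht(I)+1$; since $\dim R=1$ and $R$ is a domain, $I$ is prime of height $\dim S-1$, and the almost-complete-intersection condition says $I$ needs at most one more generator than a regular sequence. I would feed this into the conormal sequence
$$0\lo \frac{I^{(2)}}{I^2}\lo I/I^2 \lo \Omega_{S/k}\otimes_S R\lo \Omega_{R/k}\lo 0,$$
noting $\Omega_{S/k}\otimes_S R$ is free of rank $\dim S$ and $I/I^2$ has at most $\Ht(I)+1$ generators, so $\Omega_{R/k}$ has a presentation with small, controlled ranks; combined with $\depth\Der_k(R)$ being forced to $1$ (maximal) by the Auslander--Buchsbaum bound and the rank computation of Fact \ref{e}, the only possibility compatible with finite projective dimension is that $\Omega_{R/k}$ is free. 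The $a)\Rightarrow b)$ step should be handled exactly as in the preceding proposition: a nonzero finitely generated module of finite injective dimension over the Cohen--Macaulay ring $R$ forces $R$ to be Gorenstein (Remark, part i)), and over a Gorenstein ring finite injective dimension is equivalent to finite projective dimension by \cite[Theorem 2.2]{lv}.

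The main obstacle I anticipate is the passage from $\pd(\Der_k(R))<\infty$ to freeness of $\Omega_{R/k}$: unlike the $\fm^2=0$ case, one cannot simply split a two-term sequence, and unlike the depth-zero case one cannot directly quote \cite[Lemma 2.6]{ram}. I expect one must argue that $\Der_k(R)=\Omega_{R/k}^\ast$ is a second syzygy, hence reflexive; that over the $1$-dimensional domain $R$ it is locally free on the punctured spectrum (where $R$ is a DVR, being a $1$-dimensional domain that is regular in codimension... this needs $R$ normal, which may require an extra word, or one argues via the smooth locus); a reflexive module over a $1$-dimensional normal domain that is locally free on the punctured spectrum is free — but $R$ need not be normal. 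This is the delicate point: I would either add normality as a tacit consequence of the hypotheses, or replace "locally free on the punctured spectrum" by a direct argument that $\pd<\infty$ plus the rank-one-defect from the almost-complete-intersection presentation pins down the module up to free summands, so Krull--Remak--Schmidt and the Auslander--Buchsbaum depth count finish it off. Once freeness of $\Omega_{R/k}$ is in hand, Fact \ref{e}(b) gives regularity immediately.
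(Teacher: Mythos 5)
Your proposal has two genuine gaps, one in each nontrivial implication. For $a)\Rightarrow b)$ you argue that a nonzero finitely generated module of finite injective dimension over the Cohen--Macaulay ring $R$ forces $R$ to be Gorenstein, citing the Remark's part i); but that statement (Bass) only yields Cohen--Macaulayness, and the claim you need is false in general: over any non-Gorenstein CM ring the canonical module $\omega_R$ has finite injective dimension. This step is precisely where the almost-complete-intersection hypothesis must enter, and the paper uses it as follows: $\Der_k(R)=\Omega_{R/k}^\ast$ is a nonzero dual module over the one-dimensional CM domain $R$, hence has depth one and is maximal Cohen--Macaulay; an MCM module of finite injective dimension is a direct sum of copies of $\omega_R$; dual modules over a domain are reflexive, so $\omega_R$ is reflexive; by Kunz's theorem \cite[Theorem 1]{kunz}, reflexivity of the canonical module of an almost complete intersection forces $R_{\fp}$ to be a complete intersection at every height-one prime, and since $\dim R=1$ this makes $R$ a complete intersection, hence Gorenstein, after which finite injective dimension is equivalent to finite projective dimension \cite[Theorem 2.2]{lv}. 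Your placement of the ACI hypothesis inside $b)\Rightarrow c)$ via the conormal sequence is a misdiagnosis: it is not used there and your sketch of how it would ``pin down the module'' is not an argument.

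For $b)\Rightarrow c)$ you correctly anticipate the obstacle but do not resolve it, and your intended route is the wrong one. First, no ``upgrade'' from finite projective dimension to freeness is needed: since $\Der_k(R)$ is a nonzero dual module over a one-dimensional local domain it is torsion-free, so $\depth(\Der_k(R))=1=\depth R$, and Auslander--Buchsbaum gives $\pd(\Der_k(R))=0$, i.e.\ $\Der_k(R)$ is free outright. Second, one should not try to deduce freeness of $\Omega_{R/k}$ and then apply the Jacobian criterion: passing from freeness of $\Der_k(R)$ to freeness of $\Omega_{R/k}$ is essentially the Zariski--Lipman problem, and your fallback (reflexivity of $\Omega_{R/k}$, local freeness on the punctured spectrum, tacit normality) is exactly what is unproved --- $R$ is not assumed normal, and a one-dimensional non-normal domain is singular at $\fm$, so the punctured-spectrum argument says nothing. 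The correct conclusion is Lipman's theorem \cite[Theorem 1]{lip}: freeness of the derivation module implies $R$ is normal, and a one-dimensional normal local domain is regular. With these two repairs the cycle closes; as written, both implications are open.
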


\begin{proof} 	
Since $R$ is homomorphic image of a regular ring, it has a canonical module. Also, $R$ is Cohen-Macaulay.
Then the canonical module is denoted by $\omega_R$. Since $\Der_k(R)=\Hom_{R}(\Omega_{R/k},R)$ is a dual module, its depth is
one. By definition, $\Der_k(R)$ is maximal Cohen-Macaulay.

	$a) \Rightarrow b)$: 
 Maximal  Cohen-Macaulay modules of finite injective dimension
are of the form  $\omega_R^{\oplus n}$ for some $n>0$. A ring is called  quasi-reduced if it is $(S_1)$ and generically Gorenstein.
For example, reduced rings are quasi-reduced.   Over quasi-reduced rings any dual module is reflexive, see e.g. \cite[Fact 5.13]{moh} and references therein.
Since, $R$ is quasi-reduced and $\Der_k(R)$ is a dual module, we deduce that $\Der_k(R)$ is reflexive. Direct summand of reflexive modules is again reflexive.
Since $\Der_k(R)\cong\omega_R^{\oplus n}$, it follows that $\omega_R$ is reflexive. Now, we recall the following result of Kunz \cite[Theorem 1]{kunz}:
	\begin{enumerate} 	
			\item[Fact:] Suppose $A$ is almost complete-intersection. The following are equivalent
		\begin{enumerate}
	\item[i)]   $K_A$ is reflexive
	\item[ii)]  For all $\fp\in\Spec(A)$ with $\Ht(\fp) = 1$ the local ring $A_{\fp}$ is a complete
	intersection.
\end{enumerate}
\end{enumerate}
We apply this fact to see $R$ is complete-intersection. In particular,
$R$ is Gorenstein.  Over Gorenstein rings, finiteness of
injective dimension is same as finiteness of projective dimension.
So, $\pd(\Der_k(R))<\infty$.

$b) \Rightarrow c)$:  Recall that $\depth(\Der_k(R))=1$. By Auslander-Buchsbaum formula,
$\Der_k(R)$ is free. In the light of Lipman's result, $R$ is normal, see \cite[Theorem 1]{lip}.
Since $R$ is  $1$-dimensional, $R$ is regular.

$c) \Rightarrow a)$: This is trivial.
\end{proof}

\begin{example}Let $R:=k[T^3,T^4,T^5]$ where $k$ is a field of zero characteristic. Then $\id(\Der_k(R))=\infty=\pd(\Der_k(R))$.
\end{example}

\begin{proof}It is easy to see that
	$R\cong \frac{k[X,Y,Z]}{(X^{2}Y-Z^{2},XZ-Y^{2},YZ-X^{3})}$.
	By definition, $R$ is 	almost complete-intersection.
	Since $R$ is not regular, $\id(\Der_k(R))=\infty=\pd(\Der_k(R))$.  
\end{proof}
A ring  $R$ is called purely almost complete intersection if there is a regular ring $S$ with an ideal $I$ such that
$\mu(I)=\Ht(I)+1$ and that $R=S/I$.

\begin{corollary}Let $R$ be a $1$-dimensional purely almost complete intersection local domain which is essentially
	of finite type over a field  $k$ of zero characteristic. Suppose $\Der_k(R)$ is nonzero.	Then  $\id(\Der_k(R))=\pd(\Der_k(R))=\infty$.
\end{corollary}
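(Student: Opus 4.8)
The plan is to recognize this as an almost immediate consequence of Observation \ref{almost}, once we observe that a purely almost complete intersection ring is never regular.

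First I would check the hypotheses of Observation \ref{almost}. Since $\mu(I)=\Ht(I)+1$ in particular gives $\mu(I)\le\Ht(I)+1$, every purely almost complete intersection ring is an almost complete intersection; hence the given $R$ is a $1$-dimensional almost complete intersection local domain, essentially of finite type over the field $k$ of characteristic zero, with $\Der_k(R)\neq0$. Thus Observation \ref{almost} applies and tells us that $\id(\Der_k(R))<\infty$, $\pd(\Der_k(R))<\infty$ and ``$R$ is regular'' are equivalent. It therefore suffices to show that $R$ is not regular.

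This last step is where the word ``\emph{purely}'' is used. Writing $R\cong S/I$ with $S$ regular as in the definition, we may --- since $R$ is local, so that $I$ is a prime ideal of $S$ contained in a single maximal ideal, and since this reduction leaves the difference $\mu(I)-\Ht(I)$ unchanged --- assume that $S$ is a regular \emph{local} ring. If $R$ were regular, then $I$ would be generated by part of a regular system of parameters of $S$, so that $\mu(I)=\Ht(I)=\dim S-\dim R$; this contradicts $\mu(I)=\Ht(I)+1$. Hence $R$ is not regular (equivalently, $R$ is not a complete intersection ring). By the equivalence recalled in the previous paragraph, both $\id(\Der_k(R))$ and $\pd(\Der_k(R))$ are therefore infinite, which is the assertion.

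I expect the only delicate point to be the reduction at the start of the last paragraph: passing from the presentation $R\cong S/I$ provided by the definition (where $S$ need not be local) to one over a regular local ring with the same complete-intersection defect $\mu(I)-\Ht(I)$. This is routine, the essential reason being that $\operatorname{Spec}(S/I)$ has a unique closed point; everything else is either a direct citation of Observation \ref{almost} or the elementary structure theory of quotients of regular local rings.
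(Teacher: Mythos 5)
Your overall skeleton agrees with the paper's: both arguments funnel the statement through Observation \ref{almost}, so everything hinges on showing that $R$ is not regular. The paper does this by citing Kunz \cite{kunz2} (purely almost complete intersections are not even Gorenstein), whereas you attempt an elementary argument. Your final step is fine: over a regular \emph{local} ring $S$, if $S/I$ is regular then $I$ is generated by part of a regular system of parameters, so $\mu(I)=\Ht(I)$, contradicting $\mu(I)=\Ht(I)+1$.

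The genuine gap is exactly the step you dismiss as routine: the reduction from the presentation $R=S/I$ with $S$ regular (the paper's definition does \emph{not} require $S$ local) to a presentation over the regular local ring $S_{\fm}$, ``leaving the difference $\mu(I)-\Ht(I)$ unchanged.'' Localizing at the unique maximal ideal $\fm$ containing the prime $I$ preserves $\Ht(I)$, but it can strictly decrease $\mu$: the number of generators of $I$ over the global ring $S$ is obstructed by data living away from $V(I)$ (e.g.\ the class of $I$ in the divisor class group of $S$), and the fact that $\Spec(S/I)$ has a unique closed point says nothing about this. A concrete instance of the phenomenon: if $S$ is a Dedekind domain with nontrivial class group and $I$ is a non-principal maximal ideal, then $S/I$ is local, $\mu(I)=2=\Ht(I)+1$, yet $\mu(I_{\fm})=1=\Ht(I_{\fm})$; so the defect is \emph{not} preserved, and with the paper's literal presentation-based definition your contradiction evaporates (indeed, whether ``purely almost complete intersection'' in this liberal sense forces non-regularity at all is precisely the delicate point). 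Your argument becomes correct if one reads the definition as Kunz does, with $S$ regular local (and a minimal presentation), which is evidently the intended setting; but as written the reduction is unjustified, and this is the place where the paper instead leans on Kunz's theorem rather than on an elementary localization argument.
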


\begin{proof} 	
By a result of 	Kunz \cite{kunz2}, purely almost complete intersection rings are not Gorenstein. Now,
the desired claim is in Observation \ref{almost}.
\end{proof}

The next result is an extension of Proposition \ref{m2}:

\begin{proposition}
	Let $R$ be a graded algebra over a field $k$ of zero characteristic. Assume $R$ is Cohen-Macaulay  and of minimal multiplicity. The following are equivalent:
	\begin{enumerate}
		\item[a)] $\Gdim(\Der_k(R))<\infty$,
		\item[b)]  $R$ is  Gorenstein.
	\end{enumerate}
\end{proposition}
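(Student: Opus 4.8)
The plan is to reduce the "minimal multiplicity" hypothesis to the situation already handled in Proposition \ref{m2}, by passing to a generic Artinian reduction. Recall that a Cohen-Macaulay graded algebra $R$ of dimension $d$ has minimal multiplicity precisely when, after modding out by a maximal regular sequence $\underline{t}=t_1,\dots,t_d$ of generic linear forms, the Artinian graded quotient $\overline{R}:=R/(\underline{t})$ satisfies $\overline{\fm}^2=0$, where $\overline{\fm}$ is the graded maximal ideal. (This is the classical characterization $e(R)=\embdim R-\dim R+1$.) So first I would choose such a regular sequence of linear forms and set $\overline{R}=R/(\underline{t})$, noting $\overline{R}$ is again a graded $k$-algebra of finite type over $k$ with $\overline{\fm}^2=0$, and that $R$ is Gorenstein iff $\overline{R}$ is Gorenstein (the Gorenstein property passes through a regular sequence in both directions).

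Next I would compare $\Der_k(R)$ with $\Der_k(\overline{R})$, or more precisely the Kähler differentials. Using the conormal sequence for $R\twoheadrightarrow\overline{R}$ with $I=(\underline{t})$, and the fact that $\underline{t}$ is a regular sequence of linear forms so that $I/I^2$ is $\overline{R}$-free of rank $d$ with $I^{(2)}=I^2$, one gets $\Omega_{\overline{R}/k}\cong(\Omega_{R/k}\otimes_R\overline{R})/(\text{image of a free rank-}d\text{ summand})$; since linear forms contribute basis vectors $dt_i$ that map to part of a free basis, in fact $\Omega_{R/k}\otimes_R\overline{R}\cong\Omega_{\overline{R}/k}\oplus\overline{R}^d$. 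Dualizing and using $\Der_k(R)=\Omega_{R/k}^\ast$ together with the fact that $\underline{t}$ is also $\Der_k(R)$-regular (as $\Der_k(R)$ is maximal Cohen-Macaulay, being a dual module over a Cohen-Macaulay ring), I would conclude $\Der_k(R)\otimes_R\overline{R}\cong\Der_k(\overline{R})\oplus\overline{R}^d$, i.e. $\Der_k(R)/(\underline{t})\Der_k(R)$ differs from $\Der_k(\overline{R})$ only by a free summand.

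With this identification, $a)\Rightarrow b)$ follows: if $\Gdim_R(\Der_k(R))<\infty$, then by the Auslander-Bridger formula it equals $\depth R-\depth_R\Der_k(R)=0$ since $\Der_k(R)$ is maximal Cohen-Macaulay, so $\Der_k(R)$ is totally reflexive; totally reflexive modules stay totally reflexive modulo a regular sequence, hence $\Der_k(R)/(\underline{t})\Der_k(R)$ is totally reflexive over $\overline{R}$, and so is its direct summand $\Der_k(\overline{R})$. Thus $\Gdim_{\overline{R}}(\Der_k(\overline{R}))<\infty$, and Proposition \ref{m2} applied to $\overline{R}$ (which is of finite type over $k$ of zero characteristic with $\overline{\fm}^2=0$) gives that $\overline{R}$ is Gorenstein, whence $R$ is Gorenstein. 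The reverse implication $b)\Rightarrow a)$ is immediate, since every finitely generated module over a Gorenstein ring has finite Gorenstein dimension.

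The main obstacle I anticipate is justifying cleanly the splitting $\Omega_{R/k}\otimes_R\overline{R}\cong\Omega_{\overline{R}/k}\oplus\overline{R}^d$ — one must verify that the connecting map $I/I^2\to\Omega_{R/k}\otimes_R\overline{R}$ is a split injection onto a free direct summand, which uses that the $t_i$ can be taken to be part of a minimal generating set of $\fm$ consisting of linear forms (so their differentials are linearly independent in the Zariski cotangent space) together with the vanishing $I^{(2)}=I^2$ for a complete intersection ideal. A secondary technical point is ensuring the chosen generic linear forms are simultaneously a regular sequence on $R$, on $\Der_k(R)$, and give $\overline{\fm}^2=0$; genericity of linear forms over an infinite field handles all three at once, and one may harmlessly replace $k$ by its algebraic closure since the hypotheses and conclusion are insensitive to such a faithfully flat base change.
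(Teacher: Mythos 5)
Your reduction-to-the-Artinian-case strategy is genuinely different from the paper's argument, but it breaks at exactly the point you flagged as the main obstacle: the claimed splitting $\Omega_{R/k}\otimes_R\overline{R}\cong\Omega_{\overline{R}/k}\oplus\overline{R}^d$ is false in general, and it fails precisely for the rings where the implication $a)\Rightarrow b)$ has content, namely the non-Gorenstein ones. Take $R=k[x,y,z]/(xy,xz,yz)$ (graded, $1$-dimensional, Cohen--Macaulay of minimal multiplicity, not Gorenstein) and any linear nonzerodivisor $t=ax+by+cz$ with $a,b,c\neq 0$; then $\overline{R}\cong k[x,y]/(x^2,xy,y^2)$ has $\overline{\fm}^2=0$ as desired, and $M:=\Omega_{R/k}\otimes_R\overline{R}$ is presented by $\overline{R}^3$ modulo the classes of $\bar{y}dx+\bar{x}dy$, $\bar{z}dx+\bar{x}dz$, $\bar{z}dy+\bar{y}dz$. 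A direct check shows every homomorphism $M\to\overline{R}$ has image inside $\overline{\fm}$: writing $\phi(dx)=u$, $\phi(dy)=v$, $\phi(dz)=w$, the first relation forces the constant terms of $u,v$ to vanish, and then the second forces that of $w$ to vanish. Hence $M$ has no free direct summand, whereas $\Omega_{\overline{R}/k}\oplus\overline{R}$ does; so $M\not\cong\Omega_{\overline{R}/k}\oplus\overline{R}$, the conormal sequence does not split, and your subsequent identification $\Der_k(R)/(\underline{t})\Der_k(R)\cong\Der_k(\overline{R})\oplus\overline{R}^d$ collapses. Two further steps are also shaky even granting a splitting: passing the dual through the reduction (the natural map $\Hom_R(\Omega_{R/k},R)\otimes\overline{R}\to\Hom_{\overline{R}}(\Omega_{R/k}\otimes\overline{R},\overline{R})$ is onto only when $\underline{t}$ is regular on $\Ext^1_R(\Omega_{R/k},R)$, which fails whenever that module is nonzero of finite length), and the assertion that $\Der_k(R)$ is maximal Cohen--Macaulay because it is a dual module — duals only satisfy $(S_2)$, so for $\dim R\geq 3$ neither the Auslander--Bridger step nor the regularity of all of $\underline{t}$ on $\Der_k(R)$ is automatic.

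For comparison, the paper avoids any reduction: assuming $R$ is not Gorenstein, it invokes the fact that over a Cohen--Macaulay local ring of minimal multiplicity which is not Gorenstein every totally reflexive module is free, so $\Gdim(\Der_k(R))<\infty$ upgrades to $\pd(\Der_k(R))<\infty$; then Platte's theorem (characteristic zero) gives that $R$ is quasi-Gorenstein, hence Gorenstein since it is Cohen--Macaulay, a contradiction. If you want to salvage your approach, you would need a substitute for the splitting — but note that the minimal multiplicity hypothesis is being used in the paper not to produce an Artinian quotient with square-zero maximal ideal, but to control the class of totally reflexive modules, which is a different mechanism from the one Proposition \ref{m2} exploits.
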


\begin{proof}
Assume  $\Gdim(\Der_k(R))<\infty$. Suppose on the way contradiction that
$R$ is not Gorenstein. Apply this along with the Cohen-Macaulay  and minimal multiplicity assumptions to deduce 
that any totally reflexive module is free. For more details see e.g., \cite[Observation 4.17]{moh}.
Thus, $\pd(\Der_k(R))<\infty$. By a result of Platte \cite{p} (also see \cite[Theorem 4.5]{h}), $R$ is quasi-Gorenstein. Since $R$  is Cohen-Macaulay, we deduce that $R$ is  Gorenstein.
This completes the proof of  $a) \Rightarrow b)$. The reverse implication is trivial.
\end{proof}

\begin{example}Let $R:=k[T^3,T^4,T^5]$ where $k$ is a field of zero characteristic. Then $\Gdim(\Der_k(R))=\infty$.
\end{example}

\begin{proof}	It is easy to see that $R/T^3R\cong\frac{k[X,Y]}{\fm^2}$. Thus, $R$ is Cohen-Macaulay  and of minimal multiplicity.
	Since $R$ is not Gorenstein, $\Gdim(\Der_k(R))=\infty$.
\end{proof}

Let us compute the previous observations in a prime characteristic case:

\begin{example}
	Let $S:=\mathbb{F}_2[X,Y]$ and set $R:=\frac{S}{(X,Y)^2}$.
	Then $\Gdim(\Der_{\mathbb{F}_2}(R))=\infty=\id(\Der_{\mathbb{F}_2}(R))$.
\end{example}

\begin{proof}  For simplicity, we set $k:=\mathbb{F}_2$.
	We set $f_1:=X^2$, $f_2:=Y^2$ and $f_3:=XY$.	Recall that $\Omega_{R/k}\cong
	Rdx\oplus Rdy  /(df_1,df_2,df_3) $  where
	\begin{enumerate}
		\item[1)] $df_1=\partial(X^2)/\partial Xdx+\partial(X^2)/\partial Y dy=2Xdx=0$,
		\item[2)]$df_2=\partial(Y^2)/\partial Xdx+\partial(Y^2)/\partial Y dy=2Ydx=0$,
		\item[3)] $df_3=\partial(XY)/\partial Xdx+\partial(XY)/\partial Ydy=Ydx+Xdy$.
	\end{enumerate}
Since  $(df_3)R=\{(ry,rx):r\in R\}\cong R/(0:_R(y,x))= R/\fm$, there is an exact sequence $$0\lo R/\fm \lo R^2\lo\Omega_{R/k} \lo 0.$$ Recall that $\Hom_R(R/\fm,R)=\{r\in R:r\fm=0\}=\fm$.
	Dualizing this yields that $$0\lo\Der_k(R) \lo R^2\lo\fm\lo\Ext^1_R(\Omega_{R/k} ,R) \lo 0\quad(+)$$
	Now, we compute the projective resolution of $\Omega_{R/k}$:
	$$\ldots\lo R^2 \stackrel{( y,x)}\lo R\stackrel{(^y_x)}\lo R^2\lo \Omega_{R/k}  \lo 0.$$
	Apply $\Hom_R(-,R)$ to it, yields that  $$ \Ext^1_R(\Omega_{R/k} ,R)=H\left( R^2 \stackrel{(y,x)}\lo R\stackrel{(^y_x)}\lo R^2\right)=0.$$
	Putting this in $(+)$, we have the following exact sequence:$$0\lo\Der_k(R) \lo R^2\lo\fm \lo 0\quad(\ast)$$
	Suppose on the way of contradiction  $\Gdim(\Der_k(R))<\infty$. From $(\ast)$ we deduce that
	$\Gdim(\fm)<\infty$. 
	In view of $0\to\fm \to R\to  R/\fm \to 0$  we deduce that $\Gdim(k)<\infty$. It follows from the local-global principal that $R$ is Gorenstein. So,
	its socle is one-dimensional. On the other hand,
	$\Soc(R)=(0:_R\fm)=\fm=(x,y)$ is $2$-dimensional. This contradiction shows  that   $\Gdim(\Der_k(R))=\infty$.
	
	Suppose on the way of contradiction that $\id(\Der_k(R))<\infty$. Then it is injective. In view of $(\ast)$ we have  $ \Der_k(R)\oplus\fm \cong R^2$. The left  hand side has at least three generators and
	the right hand side needs two elements, a contradiction.
\end{proof}

Here, we work with a non-Cohen-Macaulay ring:

\begin{example}Let $R:=\frac{\mathbb{F}_2[X,Y]}{(X^2,XY^2)}$.
	Then $\Gdim(\Der_{\mathbb{F}_2}(R))=\id(\Der_{\mathbb{F}_2}(R))=\infty$.
\end{example}

\begin{proof}  For simplicity, we set $k:=\mathbb{F}_2$.
	It is easy to see that $\Omega_{R/k}\cong
	Rdx\oplus Rdy  /(Y^2dx).$  
	Note that  $(Y^2dx)R=\{(ry^2,0):r\in R\}\cong R/(0:_R(y^2))= R/ x R$.
	There is an exact sequence $$0\lo R/ xR \lo R^2\lo\Omega_{R/k} \lo 0\quad(\dagger)$$
Note that
	$\ldots\lo R  \stackrel{(x)}\lo R\stackrel{(^{y^2}_0)}\lo R^2\lo \Omega_{R/k}  \lo 0$ 	is the projective resolution of $\Omega_{R/k}$.
	Apply $\Hom_R(-,R)$ to it, yields the following complex
	$$0\lo\Der_k(R) \lo R^2\stackrel{(^{y^2}_0)}\lo R\stackrel{x}\lo R \lo \ldots\quad(+)$$
	Note that $\im((^{y^2}_0))=y^2R=R/(0:_Ry^2)=R/xR$ and $\ker(x)=(x,y^2)$. So, $$  \Ext^1_R(\Omega_{R/k} ,R)=H\left( R^2 \stackrel{(y^2,0)}\lo R\stackrel{(x)}\lo R\right)=\frac{(x,y^2)}{(y^2)}\neq0 \quad(\ast)$$  Since $(+)$ is left-exact, we have the following exact sequence
	$$0\lo\Der_k(R) \lo R^2 \lo\im((^{y^2}_0))= R/xR\lo 0\quad(\ast,\ast)$$
	Suppose on the way of contradiction  $\Gdim(\Der_k(R))<\infty$. Note that $\depth(R)=0$.
	By Auslander-Bridger formula, $\Der_k(R)$ is totally reflexive. 
	From $(\ast,\ast)$ and Auslander-Bridger formula, we deduce that
	$\Gdim(R/xR)=0$. In view of $(\dagger)$,  and Auslander-Bridger formula, we deduce that $\Omega_{R/k}$ is totally reflexive. Thus, $\Ext^1_R(\Omega_{R/k} ,R)=0$. This is in contradiction with  $(\ast)$. We proved that  $\Gdim(\Der_k(R))=\infty$.
	
	In view of $(\ast,\ast)$ we deduce that $\id(\Der_k(R))=\infty$.
\end{proof}

A  $1$-dimensional  local integral domain is called rigid if it admits no infinitesimal deformations. In other words, $T^1_R=0$. Our interests comes from $T^1_R\cong\Ext^1_R(\Omega_{R/k} ,R)$.
The  rigidity conjecture  says that rigid rings of zero characteristic are regular. For more details see  \cite{h}.

\begin{proposition}
	Let $R$ be a $1$-dimensional local irreducible $k$-algebra over a  field $k$ of zero characteristic. Assume $R$ is  of minimal multiplicity. The following are equivalent:
	\begin{enumerate}
		\item[a)] $\Ext^i_R(\Omega_{R/k} ,R)=0$ for all $1 \leq i \leq 4$,
		\item[b)]  $R$ is  regular.
	\end{enumerate}
\end{proposition}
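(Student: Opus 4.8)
The direction $b)\Rightarrow a)$ is straightforward: a regular local ring of dimension one is a discrete valuation ring, so by the Jacobian criterion (Fact~\ref{e}) together with $\Char k=0$ the module $\Omega_{R/k}$ is $R$-free, whence $\Ext^i_R(\Omega_{R/k},R)=0$ for every $i\geq 1$. For $a)\Rightarrow b)$ the plan is a proof by contradiction: assume the stated vanishing but $R$ not regular. A $1$-dimensional local domain is Cohen-Macaulay, so $R$ is Cohen-Macaulay; fix a minimal reduction $x$ of $\fm$ (a nonzerodivisor, after harmlessly enlarging the residue field), so that $\fm^2=x\fm$ and $\bar R:=R/xR$ is Artinian with $\bar\fm^2=0$, non-Gorenstein exactly when $R$ is. I would then split the argument according to whether $R$ is Gorenstein.

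If $R$ is Gorenstein, minimal multiplicity forces $\embdim R=2$: for a $1$-dimensional Cohen-Macaulay local ring of minimal multiplicity the Cohen-Macaulay type equals $\embdim R-1$ (one reads this off from $\Soc(\bar R)=\bar\fm$, using $\fm^2=x\fm$), so type $1$ gives $\embdim R=2$. Hence $R\cong S/(f)$ with $(S,\fn)$ a $2$-dimensional regular local $k$-algebra with regular parameters $x,y$ and $0\neq f\in\fn$; since $R$ is a domain, $f$ is irreducible. The conormal sequence then yields a presentation $0\lo R\stackrel{(f_x,f_y)}\lo R^2\lo\Omega_{R/k}\lo 0$, where injectivity of the first map uses that $f$ and its partial derivatives have no common factor in the UFD $S$ (here $\Char k=0$ is essential). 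Dualizing gives $\Ext^1_R(\Omega_{R/k},R)\cong S/(f,f_x,f_y)$, the Tjurina algebra of $f$ (this is the identification $\Ext^1_R(\Omega_{R/k},R)\cong T^1_R$ recorded before the statement); it is nonzero precisely when $f\in\fn^2$, i.e. precisely when $R$ is singular. This already contradicts $a)$ at $i=1$.

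If $R$ is not Gorenstein, then $\embdim R\geq 3$ and, by \cite[Observation~4.17]{moh}, minimal multiplicity forces every totally reflexive $R$-module to be free. Set $N:=\Syz^2_R(\Omega_{R/k})$; since $\dim R=1$, $N$ is maximal Cohen-Macaulay, and dimension shifting turns the hypothesis into $\Ext^i_R(N,R)=0$ for $i=1,2$ (and likewise $\Ext^i_R(\Syz^1_R\Omega_{R/k},R)=0$ for $1\leq i\leq 3$). The heart of the proof is then to show that $N$ is free. I would do this by passing to $\bar R$: as $x$ is a nonzerodivisor on $R$ and on $N$, the change-of-rings exact sequence gives $\Ext^i_{\bar R}(N/xN,\bar R)=0$ for $i=1,2$, and over $\bar R$, where $\bar\fm^2=0$, the syzygies of $N/xN$ are direct sums of copies of $k$; running the syzygy computation of Proposition~\ref{m2} in this setting, together with the local–global principle and the fact that $\bar R$ is not Gorenstein, should force $N/xN$ to be $\bar R$-free, hence $N$ to be $R$-free. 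Consequently $\pd_R\Omega_{R/k}<\infty$, and combining this with $\Ext^1_R(\Omega_{R/k},R)=0$ and dualizing the finite free resolution of $\Omega_{R/k}$ shows that $\Der_k(R)=\Omega_{R/k}^{\ast}$ is free. Then Platte's theorem \cite{p} (see also \cite[Theorem~4.5]{h}) gives that $R$ is quasi-Gorenstein, and being Cohen-Macaulay it is Gorenstein — the desired contradiction.

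The main obstacle is exactly the freeness of $N$ in the non-Gorenstein case: converting a handful of $\Ext$-vanishings into a module-theoretic conclusion needs the special homological behaviour of rings of minimal multiplicity (eventually linear resolutions, Koszul-type phenomena over $\bar R$), and this is presumably why the hypothesis is stated with the full range $1\leq i\leq 4$ rather than a single value of $i$ — the passage $\Omega_{R/k}\mapsto\Syz^2_R(\Omega_{R/k})$ followed by the change-of-rings shift to $\bar R$ consumes the vanishing in degrees $1$ through $4$. The Gorenstein case, by contrast, uses only degree $1$ and is essentially the statement that plane curve singularities are not rigid.
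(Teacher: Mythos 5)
Your overall architecture matches the paper's: both proofs split on whether $R$ is Gorenstein, both use minimal multiplicity to reduce the Gorenstein case to a plane curve ($\embdim R=2$ via the socle of $\overline{R}=R/xR$ with $\overline{\fm}^2=0$), and both derive the non-Gorenstein contradiction by forcing $\Omega_{R/k}$ (equivalently $\Der_k(R)$) to be free and then invoking Platte/Jacobian/Lipman. In the Gorenstein branch you are actually more explicit than the paper: where the paper says ``Berger's conjecture is valid for complete intersections,'' you exhibit $\Ext^1_R(\Omega_{R/k},R)\cong S/(f,f_x,f_y)$ and observe it is nonzero exactly when $f\in\fn^2$; that computation is correct (the injectivity of $R\xrightarrow{(f_x,f_y)}R^2$ from $f$ prime and $f\nmid f_x$ or $f\nmid f_y$ in characteristic zero is the right justification) and is a legitimate, self-contained replacement for the citation.

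The one genuine gap is in the non-Gorenstein branch, precisely where you write that the syzygy computation over $\overline{R}$ ``should force'' $N/xN$ to be free. The paper sidesteps this entirely by citing \cite[Observation 4.17]{moh}, which is stated as delivering exactly ``$\Ext^i_R(\Omega_{R/k},R)=0$ for $1\le i\le4$ plus minimal multiplicity and non-Gorenstein $\Rightarrow$ $\Omega_{R/k}$ free,'' so you are attempting to reprove the quoted result and the decisive step is missing. It can be closed as follows: over $\overline{R}$ with $\overline{\fm}^2=0$ and $e:=\dim_k\overline{\fm}=\embdim R-1\ge2$, every non-free module $M$ has $\Syz^1_{\overline{R}}(M)\cong k^a$ with $a\ge1$, and a direct computation gives $\dim_k\Ext^1_{\overline{R}}(k,\overline{R})=e^2-1>0$, hence $\Ext^2_{\overline{R}}(M,\overline{R})\cong\Ext^1_{\overline{R}}(k,\overline{R})^a\neq0$; so the vanishing of $\Ext^2_{\overline{R}}(-,\overline{R})$ forces freeness. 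There is also a bookkeeping slip: with $N=\Syz^2_R(\Omega_{R/k})$ the change-of-rings sequence $\Ext^i_R(N,R)\to\Ext^i_R(N,\overline{R})\to\Ext^{i+1}_R(N,R)$ needs $\Ext^4_R(\Omega_{R/k},R)=\Ext^5_R(\Omega_{R/k},R)=0$ to kill $\Ext^2_R(N,\overline{R})$, which overshoots the hypothesis; taking instead $N'=\Syz^1_R(\Omega_{R/k})$ (torsion-free, hence $x$-regular, since $R$ is a domain) uses only $\Ext^3$ and $\Ext^4$, stays within the range $1\le i\le4$, and yields $\pd_R\Omega_{R/k}\le1$; combined with $\Ext^1_R(\Omega_{R/k},R)=0$ the finite free resolution splits and $\Omega_{R/k}$ is free, after which your appeal to Platte (or the paper's appeal to the Jacobian criterion) finishes the contradiction.
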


\begin{proof}Since $R$ is reduced, it satisfies Serre's condition $(S_1)$. In particular, $R$ is Cohen-Macaulay.

	$a) \Rightarrow b)$: First, we show that $R$ is Gorenstein. 
	Suppose on the way of contradiction that $R$ is not Gorenstein, and search for a contradiction. We are in the situation of \cite[Observation 4.17]{moh}. It says that over 
a	Cohen-Macaulay ring  of minimal multiplicity which is not Gorenstein,
	 the vanishing of  $\Ext^i_R(\Omega_{R/k} ,R)=0$ for   $1 \leq i \leq 4$ implies  that $\Omega_{R/k}$ is free. One may use Jacobian criterion to see  $R$ is regular (or use the Lipman's result).
	But, regular rings are Gorenstein. This contradiction allow us to assume $R$ is Gorenstein. In particular,  the rigidity conjecture is equivalent with the  Berger's conjecture, see \cite{ber} for the statement. We are going to prove  the latter one. Since $R$ is of minimal multiplicity, there is a parameter element $r$ such that $r\fm=\fm^2$. Set $\overline{R}:=R/rR$. Then $(\overline{R},\fn)$
	is a zero-dimensional Gorenstein ring with $\fn^2=0$. Without loss of the generality we may and do assume that $\fn\neq 0$. Then $\Soc(R)=\fn$. Since the socle is one-dimensional, there is some $s\in\fm$ such that $\fn=(\overline{s})$, where $\overline{s}:=s+rR$. Since $\fn=\frac{\fm}{rR}$ we conclude that $\fm=(s,r)$. In view of Fact \ref{mat} $R$ is complete-intersection. But, Berger's conjecture is valid in this case, and so $R$ is regular, as claimed.
	
	$a) \Rightarrow b)$: This is trivial because $\Omega_{R/k}$ is free.
\end{proof}

\section{Differentials and tensor products}

Recall that
\begin{question}(See  \cite[Question   3.7]{j}) Let $R$ be a local ring and let $M$ be a finite module. Does there exist a torsionless
	$	R$-module $U \neq 0$ such that $$\depth(M  \otimes_R U )= \depth(N  \otimes_R U )$$
	whenever $N$ is a non-zero finite $R$-module with $\depth M \leq \depth N$?
\end{question}

\begin{observation}
	Let $(R,\fm,k)$ be a  local integral domain of depth $d>0$ and let $M:=\fm$ and let $N:=R$.  Suppose there is a torsionless
	$	R$-module $U \neq 0$ such that $$\depth(M  \otimes_R U )= \depth(N  \otimes_R U )\quad(\ast)$$
 Then 	 $\depth M \leq \depth N$ and $d=1$.
\end{observation}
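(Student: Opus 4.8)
The inequality $\depth M \le \depth N$ should require no use of $(\ast)$ at all: I would feed the short exact sequence $0 \lo \fm \lo R \lo k \lo 0$ into the depth lemma. Since $\depth k = 0$ and $\depth R = d > 0$, one estimate gives $\depth \fm \ge \min\{d,1\} = 1$, while the other gives $0 = \depth k \ge \min\{\depth\fm - 1,\, d\}$, which (as $d \ge 1$) forces $\depth\fm \le 1$. Hence $\depth M = \depth\fm = 1 \le d = \depth R = \depth N$. So the substance of the statement is that $(\ast)$ forces $d = 1$, and the plan is to extract this from one short $\Tor$ computation.

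First I would record two things about $U$. Being torsionless it embeds in a free module; since $R$ is a domain this makes $U$ torsion-free, so $\Ass_R U \subseteq \Ass_R R = \{(0)\}$, and as $U \ne 0$ and $\fm \ne (0)$ we conclude $\depth U \ge 1$, i.e. $\depth(N\otimes_R U) = \depth U \ge 1$. Then I would tensor $0 \lo \fm \lo R \lo k \lo 0$ with $U$; since $R$ is free, $\Tor_1^R(R,U) = 0$, and the long exact sequence collapses to
$$0 \lo \Tor_1^R(k,U) \lo \fm\otimes_R U \lo U \lo k\otimes_R U \lo 0 ,$$
exhibiting $T := \Tor_1^R(k,U)$ as a submodule of $\fm\otimes_R U$ that is annihilated by $\fm$.

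The crux is now a dichotomy on $T$. If $T \ne 0$, then, being a nonzero module killed by $\fm$, it contains an element whose annihilator is exactly $\fm$; hence $\fm \in \Ass(\fm\otimes_R U)$ and $\depth(M\otimes_R U) = \depth(\fm\otimes_R U) = 0$, contradicting $(\ast)$ since $\depth(N\otimes_R U) \ge 1$. Therefore $T = 0$, i.e. $\Tor_1^R(k,U) = 0$, and because $U$ is finitely generated over the Noetherian local ring $R$ this forces $U$ to be flat, hence free: $U \cong R^{\oplus r}$ with $r \ge 1$. Substituting into $(\ast)$ collapses it to $\depth(\fm^{\oplus r}) = \depth(R^{\oplus r})$, that is $1 = \depth\fm = \depth R = d$, as desired.

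The computation itself is routine; the only point that needs genuine care --- and the nearest thing to an obstacle --- is the step ``$\Tor_1^R(k,U) = 0 \Rightarrow U$ free'', which uses that $U$ is finitely generated, so one should confirm that the module $U$ in the cited question is meant to be finite (it is, matching the surrounding hypotheses). Everything else is the depth lemma together with the elementary remark that a nonzero $\Tor_1^R(k,U)$ sitting inside $\fm\otimes_R U$ drives the depth of $M\otimes_R U$ to zero.
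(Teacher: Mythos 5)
Your argument is correct and follows essentially the same route as the paper: tensor $0\to\fm\to R\to k\to 0$ with $U$, use the positive depth of $\fm\otimes_R U$ (forced by $(\ast)$ and torsion-freeness of $U$) to kill $\Tor_1^R(k,U)$, conclude $U$ is free, and then compare $\depth\fm=1$ with $d$. The only differences are cosmetic: you get $\depth\fm=1$ from the depth lemma rather than the paper's local cohomology computation, and you rule out $\Tor_1^R(k,U)\neq 0$ via an associated-prime argument instead of the finite-length-submodule argument.
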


\begin{proof}
Thanks to $(\ast)$,  we know $\depth(M  \otimes_R U )= \depth(N  \otimes_R U )=\depth(U)>0$, because $U$  is  torsionless and $R$ is an integral domain. We  look at the short exact sequence $\zeta:=0\to M \to R\to k\to 0$. We apply the functor $U\otimes_R-$ to  $\zeta$ and obtain the following exact sequence $$0=\Tor^R_1(R,U)\lo \Tor^R_1(U,k)\lo M\otimes_R U\lo U\lo k\otimes_R U\lo 0.$$ On one hand $\Tor^R_1(U,k)\subset M\otimes_R U$ and $\ell(\Tor^R_1(U,k))<\infty$. On the other hand, $\depth(M  \otimes_R U )>0$. Combining these, $\Tor^R_1(U,k)=0$. From this, $\pd(U)=0$. According to a folklore result of Kaplansky, $U$ is free. In view of $(\ast)$, we observe $$\depth(\fm)=\depth(M  \otimes_R U )= \depth(N  \otimes_R U )=\depth(R)=d\quad(+)$$Also, $\zeta$
induces the exact sequences  $0\to H^0_{\fm}(\fm)\to H^0_{\fm}(R)=0$  and  $0=H^0_{\fm}(R)\to H^0_{\fm}(k)=k\to H^1_{\fm}(\fm).$ Thus,
$\depth(\fm)=\inf\{i:H^i_{\fm}(\fm)\neq 0\}=1$. Apply this along with $(+)$ to see $d=1$, as claimed.	
\end{proof}

\begin{corollary}
Concerning the above observation, take $d>1$. This yields a negative answer to Question 5.1.
\end{corollary}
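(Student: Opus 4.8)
The plan is to turn the preceding Observation into a genuine counterexample by choosing the ring to have depth strictly bigger than one. Fix any local integral domain $(R,\fm,k)$ of depth $d>1$; for concreteness one may take a regular local ring of Krull dimension at least $2$, say $R=k[[X,Y]]$ with $\fm=(X,Y)$. Set $M:=\fm$ and $N:=R$. The first thing to record is that $N$ is an admissible comparison module for Question 5.1, i.e. that $\depth M\leq\depth N$. This is verified exactly as in the proof of the Observation: from $0\lo\fm\lo R\lo k\lo 0$, the vanishing $H^0_{\fm}(\fm)=0$ together with $H^i_{\fm}(R)=0$ for $i\leq d-1$ yields $H^1_{\fm}(\fm)\cong k\neq 0$, hence $\depth(\fm)=1<d=\depth(R)$.

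Now I would argue by contradiction. Suppose Question 5.1 had a positive answer for this $R$ and this $M$. Then there would be a torsionless $R$-module $U\neq 0$ with $\depth(M\otimes_R U)=\depth(N'\otimes_R U)$ for every nonzero finite $R$-module $N'$ satisfying $\depth M\leq\depth N'$. Since $N=R$ satisfies this depth inequality by the previous paragraph, we may take $N'=N=R$, which places us precisely in the hypothesis of the Observation: a torsionless module $U\neq 0$ with $\depth(\fm\otimes_R U)=\depth(R\otimes_R U)$. The Observation then forces $d=1$, contradicting the choice $d>1$. Hence no such $U$ exists, and Question 5.1 fails for $R$ and $M:=\fm$.

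The whole argument is a formal consequence of the Observation, so the only genuine point to be careful about is the innocuous inequality $\depth\fm\leq\depth R$, namely that the single module $N=R$ is among the modules against which a hypothetical universal $U$ must be tested; once that is in place, the rigid conclusion $d=1$ of the Observation applies verbatim. I do not expect a real obstacle here beyond this bookkeeping. If desired, one can also note that the same reasoning applies to any finite module $M$ with $1=\depth M<\depth R$, so the family of counterexamples to Question 5.1 is not confined to $M=\fm$.
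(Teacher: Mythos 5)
Your argument is correct and is exactly the intended reading of the corollary: since the Observation shows that any torsionless $U\neq 0$ with $\depth(\fm\otimes_R U)=\depth(R\otimes_R U)$ forces $d=1$, a domain with $\depth R=d>1$ (together with the check $\depth\fm=1\leq d$, so $N=R$ is an admissible test module) admits no such $U$, refuting Question 5.1. This matches the paper's (implicit) proof, which is precisely this immediate contradiction.
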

The following is in \cite[Proposition 3.14]{j} under the extra assumption
 $\Tor^R_4 (\Der_k(R),R/\fm^r) =0$.

\begin{fact} 	Let $(R,\fm)$ be a local $k$-algebra with $\depth R \geq 1$ , where $k$ is a ring such
that $\Der_k(R)$ is a finite $ R$-module. Suppose there exists a torsionless $R$-module $U \neq  0 $ such
that $\depth( \Der_k(R) \otimes_R U)\neq\depth(U)-1$.
 Assume
$\Tor^R_3 (\Der_k(R), R / \fm^r) = 0$ for some integer $ r \geq 1$.
Then $\Der_k(R)$ is   free.
\end{fact}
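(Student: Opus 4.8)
The plan is to imitate the proof of the related statement in \cite[Proposition 3.14]{j}, replacing the stronger vanishing hypothesis $\Tor^R_4(\Der_k(R),R/\fm^r)=0$ by the weaker $\Tor^R_3(\Der_k(R),R/\fm^r)=0$, and to track carefully where this loss of one degree is absorbed. First I would set $M:=\Der_k(R)$ and recall that, since $M=\Omega_{R/k}^\ast$ is a dual (hence torsionless) module over a ring of positive depth, one has $\depth(M)\geq 1$; so the hypothesis $\depth(M\otimes_R U)\neq\depth(U)-1$ is the genuinely restrictive one. The goal is to rule out the ``generic'' behaviour $\depth(M\otimes_R U)=\depth(U)-1$ that one expects from an Auslander-type depth formula, and conclude that $M$ must be free.

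Next I would bring in the torsionless module $U\neq 0$. Torsionlessness gives an embedding $0\to U\to F$ with $F$ free, and more importantly a short exact sequence $0\to U\to R^n\to C\to 0$ with $C$ a first cosyzygy-type module. Tensoring with $M$ and chasing the resulting $\Tor$ sequence, the controlling terms are $\Tor^R_i(M,C)$, and these can be identified (via dimension shifting over the $R/\fm^r$-filtration, or directly) with $\Tor^R_{i+1}(M,-)$ applied to associated graded pieces, each of which is a module of finite length annihilated by a power of $\fm$, hence built from copies of $k=R/\fm$ (after passing to $R/\fm^r$). This is exactly the role of the hypothesis $\Tor^R_3(M,R/\fm^r)=0$: it forces the relevant third-order $\Tor$ against the length module to vanish, which in the degree bookkeeping is one less than what \cite{j} needed because the auxiliary sequence here contributes one extra shift through $\depth R\geq 1$ rather than $\depth R\geq$ something larger. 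I would then invoke the depth formula / Auslander–Buchsbaum–type inequality: under these vanishings, $\depth(M\otimes_R U)=\depth(U)-1$ unless $\Tor^R_{\geq 1}(M,U)=0$, i.e. unless $M$ is Tor-independent from $U$. Given the standing hypothesis $\depth(M\otimes_R U)\neq\depth(U)-1$, we are forced into $\Tor^R_i(M,U)=0$ for all $i\geq 1$.

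Finally, from $\Tor^R_i(M,U)=0$ for $i\geq 1$ together with $U$ being a faithful (nonzero torsionless, hence containing a nonzerodivisor if $R$ is a domain, and in general of positive rank/support) module, I would apply the rigidity-type or Auslander torsion-vanishing argument — here the cleanest route is: a nonzero torsionless module has $\Supp U=\Spec R$ in the relevant sense, and $\Tor$-vanishing against such a module, combined with $\depth R\geq 1$ and the finite length of the obstructions coming from $R/\fm^r$, propagates to $\Tor^R_i(M,k)=0$ for $i\gg 0$, whence $\pd_R M<\infty$; then a second application of the depth comparison (or directly Auslander–Buchsbaum) pins down $\pd_R M=0$, so $M=\Der_k(R)$ is free.

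The main obstacle I expect is the degree bookkeeping in the middle step: showing precisely that the single hypothesis $\Tor^R_3(\Der_k(R),R/\fm^r)=0$ (rather than vanishing in degree $4$) already suffices to kill all the obstruction terms $\Tor^R_i(M,C)$ that enter the depth formula for $M\otimes_R U$. This requires being careful about (a) how many shifts the auxiliary torsionless-module sequence $0\to U\to R^n\to C\to 0$ introduces, and (b) that once one third-order $\Tor$ against a finite-length module vanishes, rigidity-type propagation handles the higher ones — so that the $\depth R\geq 1$ hypothesis, not a larger depth bound, is exactly what is needed. The rest (torsionless $\Rightarrow$ free via Kaplansky-type arguments once $\pd$ is finite, and the final identification with freeness of $\Der_k(R)$) is routine given the earlier material in the paper.
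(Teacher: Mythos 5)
There is a genuine gap, and it sits exactly where you flagged ``the main obstacle I expect.'' The paper does not re-prove \cite[Proposition 3.14]{j} at all: its entire argument is a two-line reduction. By the theorem of Celikbas and Takahashi \cite[Corollary 1.3]{c}, the power $\fm^r$ is $\Tor$-rigid, hence so is $R/\fm^r$; therefore the hypothesis $\Tor^R_3(\Der_k(R),R/\fm^r)=0$ automatically yields $\Tor^R_4(\Der_k(R),R/\fm^r)=0$, and one then quotes \cite[Proposition 3.14]{j} verbatim with its original (degree $4$) hypothesis. Your plan instead tries to rerun the internals of that proposition with the weaker degree-$3$ vanishing and to absorb the lost degree by ``rigidity-type propagation'' for modules ``built from copies of $k$.'' That is precisely the unproved step: $\Tor$-rigidity is \emph{not} a generic property of finite-length (or even cyclic) modules, and nothing in your sketch supplies it. What makes the statement true is the specific, nontrivial recent theorem that powers of the maximal ideal are $\Tor$-rigid; without citing or proving that, the passage from one vanishing $\Tor_3$ to the vanishing of the higher obstruction terms does not go through, and the proof collapses to a restatement of what must be shown.

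Secondary issues: the dichotomy you assert (``$\depth(M\otimes_R U)=\depth(U)-1$ unless $\Tor^R_{\geq 1}(M,U)=0$'') is not a standard fact and is not justified; the claimed propagation from $\Tor$-vanishing against $U$ to $\Tor^R_i(M,k)=0$ for $i\gg 0$ is unexplained; and the final jump from $\pd_R M<\infty$ to $\pd_R M=0$ does not follow from $\depth R\geq 1$ alone --- in \cite{j} the hypothesis $\depth(\Der_k(R)\otimes_R U)\neq\depth(U)-1$ is what forces freeness, and your sketch uses it only once, earlier. If you want a correct write-up matching the paper, replace the middle and final steps by: (1) invoke Celikbas--Takahashi to upgrade the $\Tor_3$ hypothesis to the $\Tor_4$ hypothesis, and (2) apply \cite[Proposition 3.14]{j} as a black box.
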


\begin{proof}
By a result of Celikbas and Takahashi \cite[Corollary 1.3]{c}, 	${\fm^r}$
 is $\Tor$-rigid. So, $\frac{R}{\fm^r}$ is as well. This implies that  $\Tor^R_4 (\Der_k(R), \frac{R}{\fm^r}) = 0$. Now, we apply \cite[Proposition 3.14]{j}.
\end{proof}

The following is in \cite[Proposition 4.6]{j} under the extra assumption $\Ext^d_R (\Der_k(R), \Der_k(R)) = 0$.

\begin{fact} 	Let $(R,\fm)$ be a Gorenstein complete local $k$-domain of dimension $d \leq  3$, where $k$ is
	a field such that $ k\subset R$ is a finite extension. Assume that $R$ is locally a complete intersection
	in codimension $1$ (e.g., R normal). If $\Ext^1_R (\overline{\Omega}_{R/k}, R) = 0$ and $\Ext^i_R (\Der_k(R), \Der_k(R)) = 0$ for all $1\leq i\leq d-1$\footnote{ If the ring is of isolated singularity, one needs vanishing of very few of these Ext-modules.},  then $\Der_k(R)$ is free.
\end{fact}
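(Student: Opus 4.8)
The plan is to deduce this from \cite[Proposition 4.6]{j} by showing that, under the present hypotheses, the extra assumption $\Ext^d_R(\Der_k(R),\Der_k(R))=0$ appearing there is automatic. Since $d\leq 3$ and we are already assuming $\Ext^i_R(\Der_k(R),\Der_k(R))=0$ for all $1\leq i\leq d-1$, the only new piece to control is the top Ext at homological degree $i=d$. The natural idea is to exploit the fact that $R$ is Gorenstein of dimension $d$, so that $\Ext^d_R(-,R)$ is a duality-type functor on maximal Cohen--Macaulay modules, and to combine this with the partial vanishing already in hand together with $\Ext^1_R(\overline{\Omega}_{R/k},R)=0$.

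First I would recall that $\Der_k(R)=\Hom_R(\Omega_{R/k},R)$ is a dual module, hence reflexive over the normal (or codimension-one complete intersection) domain $R$, and in particular torsionless with positive depth. Next I would use the conormal and cotangent machinery: over a Gorenstein ring the finiteness of Gorenstein dimension is automatic, and one can try to transfer vanishing of $\Ext^i_R(\Der_k(R),\Der_k(R))$ into vanishing of $\Ext^i_R(\Der_k(R),R)$ or of $\Ext$ against the canonical module, using that $\omega_R\cong R$. The key step is then a rigidity/periodicity argument: show that if $\Ext^i_R(\Der_k(R),\Der_k(R))=0$ for $1\leq i\leq d-1$, then a spectral sequence or a long exact sequence coming from a presentation $0\to \Der_k(R)\to R^n\to C\to 0$ (with $C$ a high syzygy, hence MCM) forces $\Ext^d_R(\Der_k(R),\Der_k(R))$ to coincide with some lower, already-vanishing Ext group, or with an $\Ext^1$ that is killed by the hypothesis $\Ext^1_R(\overline{\Omega}_{R/k},R)=0$.

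Concretely, I would dualize the conormal sequence $0\to \overline{\Omega}_{R/k}\to R^{?}\to \cdots$, apply $\Hom_R(\Der_k(R),-)$, and chase the resulting long exact sequences to express $\Ext^d_R(\Der_k(R),\Der_k(R))$ in terms of $\Ext^{d-1}$ and $\Ext^{d}$ against $R$ and against free modules; since $\Der_k(R)$ has finite Gorenstein dimension equal to $0$ (it is MCM over the Gorenstein ring $R$), one has $\Ext^{>0}_R(\Der_k(R),R)=0$, which collapses the sequence and yields $\Ext^d_R(\Der_k(R),\Der_k(R))=0$. With that in place, \cite[Proposition 4.6]{j} applies verbatim and gives that $\Der_k(R)$ is free.

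The main obstacle I anticipate is the bookkeeping in the dimension-$3$ case: with $d=3$ one must genuinely produce the vanishing of $\Ext^3$ from the vanishing of $\Ext^1$ and $\Ext^2$ plus $\Ext^1_R(\overline{\Omega}_{R/k},R)=0$, and it is not a priori clear that no further Ext-module intervenes. The footnote in \cite[Proposition 4.6]{j} suggests that in the isolated-singularity case the situation is much cleaner, so a fallback is to first reduce to that case (the locally complete intersection in codimension one hypothesis, combined with $d\leq 3$, makes the singular locus small), handle it there, and then bootstrap. If the spectral sequence argument proves too delicate, an alternative is to invoke total reflexivity of $\Der_k(R)$ directly: over a Gorenstein ring a MCM module $G$ satisfies $\Ext^i_R(G,G)\cong \Ext^i_R(\Hom_R(G,R),\Hom_R(G,R))$ by double dualization, and iterating this self-duality together with the hypothesized vanishing in low degrees pins down the top one. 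Either way, once $\Ext^d_R(\Der_k(R),\Der_k(R))=0$ is secured, the conclusion is immediate from the cited result.
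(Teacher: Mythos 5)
Your plan has the right shape (supply the missing hypothesis $\Ext^d_R(\Der_k(R),\Der_k(R))=0$ and then quote \cite[Proposition 4.6]{j}), but the step where you manufacture that vanishing is exactly where the argument breaks, and it is exactly the point the paper outsources to a deep result: the proof in the paper is to combine \cite[Proposition 4.6]{j} with the Auslander--Reiten principle of Ono and Yoshino \cite{o}, a duality theorem that uses completeness, the Gorenstein hypothesis, and the codimension-one (locally complete intersection / locally free) condition. Your claimed collapse --- ``$\Der_k(R)$ is maximal Cohen--Macaulay, hence $\Ext^{>0}_R(\Der_k(R),R)=0$, and chasing the sequences yields $\Ext^d_R(\Der_k(R),\Der_k(R))=0$'' --- is not a formal consequence of the hypotheses. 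From an embedding $0\to\Der_k(R)\to R^n\to C\to 0$ the long exact sequence (after using $\Ext^d_R(\Der_k(R),R)=0$) only exhibits $\Ext^d_R(\Der_k(R),\Der_k(R))$ as a quotient of $\Ext^{d-1}_R(\Der_k(R),C)$, and nothing you assume kills that group; the cokernel $C$ is not a high syzygy and need not be maximal Cohen--Macaulay. If vanishing of $\Ext^i_R(M,M\oplus R)$ in degrees $1\le i\le d-1$ formally forced vanishing in degree $d$ for totally reflexive $M$, the Auslander--Reiten conjecture over Gorenstein rings would be an easy exercise; it is not, and removing the top-degree vanishing is precisely the content of the Araya/Ono--Yoshino results that the paper cites.

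Two further inaccuracies compound the gap. First, for $d=3$ the dual module $\Der_k(R)=\Omega_{R/k}^{\ast}$ automatically has depth at least $2$, not $3$, so ``it is MCM'' is itself unproven (this is where the hypothesis $\Ext^1_R(\overline{\Omega}_{R/k},R)=0$ is actually needed in \cite{j}, not in the role you assign it). Second, your fallback reduction to the isolated-singularity case does not work: being a complete intersection in codimension one (or even normal) still allows a one-dimensional singular locus when $d=3$. Finally, the self-duality $\Ext^i_R(G,G)\cong\Ext^i_R(G^{\ast},G^{\ast})$ for totally reflexive $G$ is symmetric in the degree $i$ and cannot convert vanishing in degrees below $d$ into vanishing in degree $d$. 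The correct repair is the paper's one-line proof: invoke \cite[Theorem]{o} to dispense with the degree-$d$ Ext hypothesis and then apply \cite[Proposition 4.6]{j}.
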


\begin{proof}
Combine \cite[Theorem]{o} and \cite[Proposition 4.6]{j}.
\end{proof}

The following was asked in \cite[Remark 3.5]{j}:

\begin{question}
	Let $R$ be a local $k$-algebra which is integral domain  $\Der_k(R)$ is finitely generated of finite projective dimension and that $\depth(R)\leq 3$.  What is $\depth(\Der_k(R)\otimes_R\Der_k(R))$?
\end{question}

\begin{observation}\label{deptht}
	Adopt the above notation and suppose in addition $R$ is $3$-dimensional and Cohen-Macaulay. Then 
	$\depth(\Der_k(R)\otimes_R\Der_k(R))=1$ if and only if $\Der_k(R)$ is not free.
\end{observation}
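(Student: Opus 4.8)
The plan is to compare $\depth(\Der_k(R)\otimes_R\Der_k(R))$ with the vanishing of the relevant $\Tor$-modules, using the fact that over a Cohen-Macaulay local ring a finitely generated module of finite projective dimension is automatically free when it is maximal Cohen-Macaulay. Write $D:=\Der_k(R)$. Since $R$ is an integral domain, $D=\Omega_{R/k}^\ast$ is torsionless, hence of positive depth; since $R$ is $3$-dimensional Cohen-Macaulay, $\depth R=3$. First I would dispose of the easy direction: if $D$ is free, then $D\otimes_R D\cong D^{\mu(D)}$ is free, so $\depth(D\otimes_R D)=3\neq1$; thus $\depth(D\otimes_R D)=1$ forces $D$ non-free.

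For the converse, suppose $D$ is not free. Because $\pd_R(D)<\infty$ and $R$ is Cohen-Macaulay, the Auslander--Buchsbaum formula gives $\pd_R(D)=\depth R-\depth D=3-\depth D$; since $D$ is not free, $\pd_R(D)\geq1$, so $\depth D\leq 2$. The key input is a depth formula for tensor products: when $\pd_R(D)<\infty$ and $\Tor^R_i(D,D)=0$ for all $i\geq1$, one has the Auslander depth formula
\[
\depth(D\otimes_R D)=\depth D+\depth D-\depth R=2\depth D-3 .
\]
The strategy is to argue that here the higher $\Tor$'s indeed vanish, or rather to use rigidity: over a $3$-dimensional ring with $\pd_R D<\infty$, the modules $\Tor^R_i(D,D)$ are governed by few initial ones, and (invoking the preceding facts on $\Tor$-rigidity of $D$, e.g. the Celikbas--Takahashi type rigidity cited earlier) $\Tor^R_{\geq1}(D,D)=0$ once $\depth D$ is large enough. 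One then runs the numerics: if $\depth D=2$ the depth formula yields $\depth(D\otimes_R D)=1$, which is exactly the claim; and one must rule out $\depth D=1$ as well as reconcile the case $\depth D=2$ with $D$ being non-free (which is consistent, since $\pd_R D=1\ne0$). So the real content is: $D$ not free $\Rightarrow$ $\depth D=2$ $\Rightarrow$ $\depth(D\otimes_R D)=1$.

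To pin down $\depth D=2$, I would use that $D$ is reflexive (it is a dual module over a domain, hence satisfies $(S_2)$ as a second syzygy-type module, or invoke that $D=\Omega_{R/k}^\ast$ and reflexivity of duals over integral domains which are $(S_1)$ and generically Gorenstein, as recalled in the excerpt), so $\depth D\geq\min\{2,\dim R\}=2$; combined with $\depth D\leq 2$ from Auslander--Buchsbaum and non-freeness, this gives $\depth D=2$ exactly. Then $\pd_R D=1$, and for a module of projective dimension $1$ over any ring the vanishing $\Tor^R_i(D,D)=0$ for $i\geq2$ is automatic, while the depth of $D\otimes_R D$ can be computed from the short exact sequence $0\to R^{b_1}\to R^{b_0}\to D\to 0$ tensored with $D$: this gives $0\to \Tor^R_1(D,D)\to D^{b_1}\to D^{b_0}\to D\otimes_R D\to 0$, and a depth-counting argument along this four-term sequence (splitting it into two short exact sequences and using the depth lemma together with $\depth D=2$, $\depth R=3$) pins $\depth(D\otimes_R D)$ down to $1$.

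\textbf{Main obstacle.} The delicate point is controlling $\Tor^R_1(D,D)$ precisely enough to get the depth of $D\otimes_R D$ down to exactly $1$ rather than merely $\leq 2$: the depth lemma only gives inequalities, so I expect to need either that $\Tor^R_1(D,D)\ne0$ (which should follow from non-freeness of $D$ via rigidity — if $\Tor^R_1(D,D)=0$ then by $\Tor$-rigidity $\pd_R D=0$, contradicting non-freeness) and then a careful bookkeeping of depths in the four-term exact sequence. Making the implication "$\Tor^R_1(D,D)\neq0$ and the outer depths are $2,2,3$" force "$\depth(D\otimes_R D)=1$" airtight — as opposed to $0$ — is where the real care is needed, and it is exactly there that the domain hypothesis (so $D$ torsionfree, $\depth(D\otimes_R D)>0$) gets used to exclude depth $0$.
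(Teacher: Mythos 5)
Your reductions up to $\depth(\Der_k(R))\geq 2$ and $\pd(\Der_k(R))=1$ coincide with the paper's, and the easy direction is fine; but the decisive step --- showing the depth of the tensor product is \emph{exactly} $1$ --- is precisely what you leave open, and the route you sketch for it would fail. Write $D:=\Der_k(R)$. The rigidity you invoke is misapplied: the Celikbas--Takahashi result quoted earlier in the paper \cite{c} gives $\Tor$-rigidity of the powers $\fm^r$, not of $D$, so your claim ``if $\Tor^R_1(D,D)=0$ then by rigidity $\pd_R(D)=0$'' has no justification --- and it is in fact false in this setting, because $\Tor^R_1(D,D)=0$ holds automatically while $D$ may well be non-free. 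Indeed, tensoring $0\to R^{b_1}\to R^{b_0}\to D\to 0$ with $D$ embeds $\Tor^R_1(D,D)$ into $D^{b_1}$, which is torsion-free; on the other hand $\Tor^R_1(D,D)$ is a torsion module since $R$ is a domain (it vanishes at the generic point), hence it is zero. So your case analysis points in the wrong direction, and the remaining ``bookkeeping'' is the actual content: from $0\to D^{b_1}\to D^{b_0}\to D\otimes_R D\to 0$ the depth lemma alone only yields $\depth(D\otimes_R D)\geq 1$, not the required equality.

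For comparison, the paper finishes differently: after the same first steps it observes (by localizing the dual-module and Auslander--Buchsbaum argument) that $D$ is free on the punctured spectrum, and then quotes \cite[Proposition 6.2]{finitsup}, which gives $\depth(D\otimes_R D)=\depth R-2=1$ outright. Your framework can be repaired without that citation: once one knows $\Tor^R_1(D,D)=0$ (higher $\Tor$'s vanish since $\pd_R(D)=1$), Auslander's depth formula for modules of finite projective dimension with vanishing positive $\Tor$ gives $\depth(D\otimes_R D)=\depth D+\depth D-\depth R=2+2-3=1$, which also removes your worry about depth $0$. As written, however, the proposal does not contain a proof of the forward implication.
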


\begin{proof}Suppose $\Der_k(R)$ is not free. 
	Recall that $\Der_k(R)$ is a dual module.  Thus, $\depth(\Der_k(R))\geq 2$. In the light of Auslander-Buchsbaum formula we observe that $\pd(\Der_k(R))\leq 1$. Without loss of generality we may assume that $\pd(\Der_k(R))= 1$, because   $\Der_k(R)$ is not free. By the same reasoning, 
	$\Der_k(R)$ is locally free over the punctured spectrum.
	Now, recall
	from \cite[Proposition 6.2]{finitsup} that 	 $\depth(\Der_k(R)\otimes_R\Der_k(R))= \depth(R)-2=1$. The converse part is trivial.
\end{proof}

We cite to \cite{SCH} as an additional reference.


\begin{thebibliography}{99}
 \bibitem{moh}
M. Asgharzadeh, \emph{Reflexivity revisited}, arXiv:1812.00830   [math.AC].


\bibitem{finitsup}
M. Asgharzadeh,
\emph{Finite support of tensor products}, arXiv:1902.10509   [math.AC].


 \bibitem{AB} M. Auslander and M. Bridger, \emph{Stable module theory}, Mem. of
 the AMS  {\bf94}, Amer. Math. Soc., Providence 1969.

 \bibitem{ber} R.W. Berger, \emph{Differential moduln eindimensionaler lokaler ringe}, Math. Z. {\bf81}  (1963), 326–354. 


\bibitem{c} O.
Celikbas and R. Takahashi,  \emph{Powers of the maximal ideal and vanishing of (co)homology}, Glasg. Math. J. {\bf63} (2021), no. 1, 1–5.

 \bibitem{EIS}D.
Eisenbud, \emph{Commutative algebra. With a view toward algebraic geometry}, Graduate texts in mathematics, {\bf150} Springer-Verlag, New York, 1995. 

\bibitem{j}V.H.
Jorge-Pérez and C.B. Miranda-Neto,  \emph{Homological aspects of derivation modules and critical case of the Herzog–Vasconcelos conjecture},  Collectanea Math, to appear.
\bibitem{h}
J. Herzog, \emph{The module of differentials}, Workshop on commutative algebra and its relation to combinatorics and computer algebra, Trieste,
1994.

\bibitem{kunz}E.
Kunz, \emph{The conormal module of an almost complete intersection}, Proc. AMS {\bf 73} (1979),  15–21. 


\bibitem{kunz2}E.
Kunz, \emph{Almost complete intersections are not Gorenstein rings}, J. Algebra {\bf 28} (1974), 111–115.

\bibitem{lv}
G.L. Levin and Wolmer V. Vasconcelos, \emph{Homological dimensions and Macaulay rings}, Pacific J. Math.,
{\bf25}  (1968), 315-323.

\bibitem{lip}
J. Lipman, \emph{
	Free derivation modules},
Amer. J. Math.,  {\bf87} (1965), 874-898.

 \bibitem{mal}
 A.K. Maloo,
 \emph{Differential simplicity and the module of derivations},
 J. Pure Appl. Algebra, {\bf115} (1997),   81-85.
 
 
  \bibitem{martin}A.
Martsinkovsky,  \emph{Almost split sequences and Zariski differentials}, Trans. AMS
 {\bf319}, 285-307 (1990). 
 
 
 
 
 
 \bibitem{mat}
 H. Matsumura, \emph{Commutative ring theory}, Cambridge Studies in Advanced Math, \textbf{8}, (1986).
 
 	\bibitem{int}C.B.
Miranda-Neto,  \emph{Maximally differential ideals of finite projective dimension}, Bull. Sci. Math. {\bf{ 166}} (2021), 102936. 


	\bibitem{o}   M. Ono and Y. Yoshino, \emph{An Auslander-Reiten principle in derived categories}, J.
	Pure Appl. Algebra {\bf{221}} (2017), no. 6, 1268–1278.

	\bibitem{p} E.
Platte, \emph{Zur endlichen homologischen dimension von differential moduln},
Manuscripta Math. {\bf32} (1980),  295–302.

\bibitem{ram}
M. Ramras, \emph{Betti numbers and reflexive modules}, Ring theory (Proc. Conf., Park City, Utah, 1971),   297-308. Academic Press, New York, 1972.

\bibitem{SCH}
G.
Scheja and U. Storch, \emph{Differentielle eigenschaften der lokalisierungen analytischer algebren},
Math. Ann. {\bf197} (1972), 137–170.

\bibitem{PS}
C. Peskine and L. Szpiro, \emph{Dimension projective finie et cohomologie locale},
Publ. Math. IHES.  {\bf42} (1973),  47--119.

\bibitem{Y}S. Yuan, \emph{Differentiably simple rings of prime characteristic}, Duke Math. J. {\bf{31}} (1964) 623–630.




 \end{thebibliography}
\end{document}